\newcommand{\rk}{{\operatorname{rk}}}
\newcommand{\rkRange}{{\operatorname{rkRange}}}
\newcommand{\mrk}{{\operatorname{mrk }}}
\newcommand{\Mrk}{{\operatorname{Mrk }}}
\newcommand{\mi}{{\operatorname{mid }}}
\newcommand{\rad}{{\operatorname{rad }}}
\theoremstyle{plain}
\newtheorem{thm}{Theorem}
\newtheorem{cor}[thm]{Corollary}
\newtheorem{lem}[thm]{Lemma}
\newtheorem{defin}[thm]{Definition}
\theoremstyle{remark}
\newtheorem{rem}[thm]{Remark}
\newcommand{\R}{\mathbb{R}}
\newcommand{\Q}{\mathbb{Q}}
\newcommand{\N}{\mathbb{N}}
\newcommand{\I}{\mathbb{I}}
\newcommand{\al}{\pmb{\alpha}}
\def\alphaltiset#1#2{\ensuremath{\left(\kern-.2em\left(\genfrac{}{}{0pt}{}{#1}{#2}\right)\kern-.2em\right)}}
\begin{document}
\title{Generalization of real interval matrices to other fields}
\author{Elena Rubei}
\date{}
\maketitle

{\footnotesize\em Dipartimento di Matematica e Informatica ``U. Dini'', 
viale Morgagni 67/A,
50134  Firenze, Italia }

{\footnotesize\em
E-mail address: elena.rubei@unifi.it}

\def\thefootnote{}
\footnotetext{ \hspace*{-0.36cm}
{\bf 2010 Mathematical Subject Classification:} 15A99, 15A03, 65G40

{\bf Key words:} interval matrices, rank, rational realization}

\begin{abstract} 
An interval matrix is a  matrix 
whose entries are intervals in $\R$. We generalize this  concept, which has been broadly studied, to other fields. Precisely 
we define a rational interval matrix to be a matrix 
whose entries are intervals in $\Q$.
We prove that a (real) interval $p \times q$ matrix with the endpoints of all its entries in $\Q$ contains a rank-one matrix if and only if contains a rational rank-one matrix
and contains a matrix with rank smaller than $\min\{p,q\}$ if and only if it
 contains a rational matrix with rank smaller than $\min\{p,q\}$;
from these results and from the analogous criterions 
for (real) inerval matrices,
 we deduce immediately  a criterion to see when a rational interval matrix contains a rank-one matrix  and a criterion to see when it is full-rank, that is, all the matrices it contains are full-rank. Moreover, given a field $K$ and a matrix $\al$ whose entries are subsets of $K$, we describe  a criterion to find the maximal rank of a matrix contained in $\al$. 
\end{abstract}

\section{Introduction}

Let $p , q \in \N \setminus \{0\}$;
a    $p \times q$  interval matrix $\al$ is a  $p \times q$  matrix 
whose entries are intervals in $\R$; we usually denote the entry $i,j$, $\al_{i,j}$, by $[\underline{\alpha}_{i,j}, \overline{\alpha}_{i,j}]$ with $\underline{\alpha}_{i,j} \leq 
\overline{\alpha}_{i,j}$.
A  $p \times q$  matrix $A$  with entries in $\R$ is  said contained in  a $p \times q$ interval matrix $  \al $ if $a_{i,j} \in \al_{i,j} $ for any $i,j$. 
There is a wide literature about interval matrices 
and the rank of the matrices they contain.
In this paper we generalize the concept of 
interval matrix to other fields and we start the study of the range of the rank of the contained matrices. Before sketching our results, we illustrate shortly some of the literature 
on interval matrices and the rank of the contained matrices and we say also some words on  partial matrices and on the  matrices with a given sign pattern; these research fields are connected with the theory of interval matrices. 
 
  Two
of the most famous theorems on interval matrices are Rohn's theorems
on full-rank interval matrices.
We say that a  $p \times q $ interval matrix $\al$ has full rank if and only if  all the matrices contained in $\al$ have rank equal to $\min\{p,q\}$. 
For any  $p \times q $ interval matrix $\al = ( [\underline{\alpha}_{i,j}, \overline{\alpha}_{i,j}])_{i,j}$ with $\underline{\alpha}_{i,j} \leq \overline{\alpha}_{i,j}$,
let $\mi(\al)$, $\rad(\al)$ and $|\al|$ be respectively the midpoint, the radius and the modulus of $\al$, that is 
 the  $p \times q$   matrices such that $$ \mi (\al)_{i,j}= \frac{\underline{\alpha}_{i,j}+ \overline{\alpha}_{i,j}}{2}, \hspace*{1cm} \rad(\al)_{i,j}= \frac{\overline{\alpha}_{i,j}- \underline{\alpha}_{i,j}}{2}, \hspace*{1cm} 
|\al|_{i,j} = \max\{|\underline{\alpha}_{i,j}|,| \overline{\alpha}_{i,j}|\} $$ for any $i,j$. 
The following theorem
characterizes full-rank {\em square} interval matrices:

\begin{thm} {\bf (Rohn, \cite{Rohn})} \label{Rohn1}
Let   $\al =( [\underline{\alpha}_{i,j}, \overline{\alpha}_{i,j}])_{i,j}$ be  a  $p \times p$  interval matrix, where
 $\underline{\alpha}_{i,j} \leq \overline{\alpha}_{i,j}$  for any $i,j$.
Let $Y_p=\{-1,1\}^p$ and, for any $x
 \in Y_p$, denote by $T_x$ the diagonal matrix whose diagonal is $x$.
Then  $\al$ is a full-rank interval matrix 
if and only if,  for each
$x,y \in Y_p$,  $$\det\Big(\mi(\al)\Big) \,\det\Big(\mi(\al) - T_x \, \rad(\al) \, T_y\Big)>0. $$
\end{thm}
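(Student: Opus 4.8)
The plan is to set $C := \mi(\al)$ and $R := \rad(\al)$, so that the matrices contained in $\al$ are exactly those of the form $C + D$ with $|D_{i,j}| \le R_{i,j}$ for all $i,j$; note $R_{i,j} \ge 0$ and that each matrix $C - T_x R T_y$ with $x,y \in Y_p$ is contained in $\al$, since its $(i,j)$ entry $C_{i,j} - x_i R_{i,j} y_j$ differs from $C_{i,j}$ by at most $R_{i,j}$. For the easy (``only if'') direction I would argue by connectedness: the set of matrices contained in $\al$ is a product of intervals, hence a convex, connected subset of $\R^{p\times p}$, and $\det$ is continuous on it. If $\al$ has full rank then $\det$ never vanishes there, so it has constant sign; since both $C$ and every $C - T_x R T_y$ lie in this set, $\det(C)$ and $\det(C - T_x R T_y)$ share that sign and their product is positive.

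The substance is the converse. Assume $\det(C)\det(C - T_x R T_y) > 0$ for all $x,y \in Y_p$; in particular $\det(C)\ne 0$ and, writing $s$ for the common sign of $\det(C)$, every vertex value $\det(C - T_x R T_y)$ has sign $s$. Suppose, for contradiction, that $\al$ is not full rank, so some contained matrix $A = C + D$ is singular, say $Av = 0$ with $v \ne 0$. From $Cv = -Dv$ and $|D_{i,j}| \le R_{i,j}$ I get the entrywise bound $|Cv| \le R|v|$, where $|\cdot|$ denotes the entrywise modulus. Let $y \in Y_p$ be a sign vector of $v$, chosen so that $T_y v = |v|$ (set $y_j = 1$ when $v_j = 0$).

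The key step is to convert this singular $A$ into a vanishing of a determinant on the relevant family. For each $i$, the inequality $|(Cv)_i| \le (R|v|)_i$ lets me pick $t_i \in [-1,1]$ with $(Cv)_i = t_i (R|v|)_i$ (take $t_i = (Cv)_i/(R|v|)_i$ when the denominator is positive, and $t_i = 0$ otherwise, in which case $(Cv)_i = 0$ too). Letting $T_t = \mathrm{diag}(t_1,\dots,t_p)$, the matrix $C - T_t R T_y$ then satisfies $(C - T_t R T_y)v = Cv - T_t R|v| = 0$, so $\det(C - T_t R T_y) = 0$ with $t \in [-1,1]^p$. I would then observe that the function $\phi(t) := \det(C - T_t R T_y)$ is \emph{multilinear} in $(t_1,\dots,t_p)$: the variable $t_i$ scales only the $i$-th row of $T_t R T_y$, so it enters row $i$ linearly and no other row, and the determinant is linear in each row. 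A multilinear function on the cube $[-1,1]^p$ attains its extrema at the vertices $Y_p$, so if every vertex value $\phi(x)$ has the fixed nonzero sign $s$, then $\phi$ keeps that sign throughout $[-1,1]^p$ and cannot vanish — contradicting $\phi(t) = 0$. This contradiction shows $\al$ is full rank.

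I expect the multilinearity reduction to be the crux: it is what explains why only the rank-one sign patterns $T_x R T_y$ (rather than all $2^{p^2}$ vertices of the box, with one independent sign per entry) are needed. The only delicate points are the entrywise inequality $|Cv|\le R|v|$ (a direct consequence of $A$ being contained in $\al$) and the handling of components where $(R|v|)_i = 0$; both are routine. Note that the argument only uses the hypothesis for the single sign vector $y = \operatorname{sign}(v)$ together with all $x \in Y_p$, so the full strength over all pairs $(x,y)$ is more than the proof actually consumes.
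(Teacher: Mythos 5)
Your proof is correct. A point of context first: the paper itself contains no proof of this statement --- it is quoted as Theorem~\ref{Rohn1} from Rohn's 1989 paper \cite{Rohn}, so there is no internal argument to match yours against; what follows compares your route with Rohn's original one. Your ``only if'' direction (connectedness of the box of contained matrices plus the observation that $\mi(\al)$ and each vertex matrix $\mi(\al)-T_x\,\rad(\al)\,T_y$ lie in $\al$) is standard and sound. For the converse, your chain is complete: $Av=0$ with $A\in\al$ gives the Oettli--Prager-type inequality $|Cv|\le R|v|$; choosing $t_i\in[-1,1]$ with $(Cv)_i=t_i(R|v|)_i$ (with $t_i=0$ in the degenerate case $(R|v|)_i=0$, which forces $(Cv)_i=0$) and $y$ with $T_yv=|v|$ yields $(C-T_tRT_y)v=0$, hence a zero of $\phi(t)=\det(C-T_tRT_y)$ inside the cube; since $t_i$ enters only row $i$ and affinely, $\phi$ is multi-affine, so its extrema over $[-1,1]^p$ occur at the vertices $Y_p$, where by hypothesis $\phi$ has the fixed sign of $\det(C)$ --- contradiction. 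Rohn's original derivation reaches the same vertex reduction through heavier machinery: he characterizes singularity of interval matrices via the Oettli--Prager theorem on solution sets of interval linear systems and then proves separately, by an algorithmic/combinatorial sign-adjustment argument, that singularity forces one of the finitely many matrices $C-T_xRT_y$ to be singular. Your multilinearity observation collapses that vertex-reduction step to one line, which is the real economy of your approach; the trade-off is that Rohn's route produces additional structural information (e.g.\ the description of the solution set by orthant, useful for his other ``forty conditions'' in \cite{Rohn2}) that your argument does not. Your closing remarks are also accurate: the proof consumes the hypothesis only for the single sign vector $y=\operatorname{sign}(v)$ together with all $x\in Y_p$, and it is precisely the multilinearity in the row-sign variables $t$ that explains why the $2^{2p}$ rank-one sign patterns $T_xRT_y$, rather than all $2^{p^2}$ entrywise sign choices, suffice.
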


See  \cite{Rohn} and  \cite{Rohn2} for other characterizations. 
The following theorem characterizes 
full-rank $p \times q$ interval matrices, see
\cite{Rohn3}, \cite{Rohn4}, \cite{Shary}:

\begin{thm}  {\bf (Rohn)} 
A $p \times q$ interval matrix $\al$ with $p \geq q$ has full rank if and only if the system
of inequalities $$\hspace*{2cm} |\mi(\al) \, x| \leq \rad(\al) \,
|x|, \hspace*{1.5cm} x \in \R^q$$ has only the trivial solution $x=0$. 
\end{thm}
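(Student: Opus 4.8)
The plan is to reduce the full-rank property to a statement about kernels of contained matrices, and then to characterize, for a fixed vector, when a contained matrix annihilates it. Since $p \geq q$, the matrix $\al$ fails to have full rank precisely when some contained matrix $A$ satisfies $Ax = 0$ for a nonzero $x \in \R^q$. So I would prove that such a pair $(A,x)$ exists if and only if there is a nonzero $x$ with $|\mi(\al)\,x| \leq \rad(\al)\,|x|$; taking contrapositives then yields the theorem.

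First I would parametrize the matrices contained in $\al$. Writing each entry as $\al_{i,j} = [\mi(\al)_{i,j} - \rad(\al)_{i,j},\, \mi(\al)_{i,j} + \rad(\al)_{i,j}]$, a matrix $A$ is contained in $\al$ if and only if $A = \mi(\al) + E$ for some matrix $E$ with $|E| \leq \rad(\al)$ entrywise. Under this description the equation $Ax = 0$ becomes $\mi(\al)\,x = -E\,x$.

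The core of the argument is the following fixed-$x$ claim: for a given $x$, there exists $E$ with $|E| \leq \rad(\al)$ and $\mi(\al)\,x + E\,x = 0$ if and only if $|\mi(\al)\,x| \leq \rad(\al)\,|x|$. I would prove this one row at a time. As the entries $E_{i,j}$ range independently over the intervals $[-\rad(\al)_{i,j}, \rad(\al)_{i,j}]$, the $i$-th entry $(E\,x)_i = \sum_j E_{i,j}\, x_j$ sweeps out exactly the symmetric interval $[-(\rad(\al)\,|x|)_i,\, (\rad(\al)\,|x|)_i]$. Hence the prescribed value $-(\mi(\al)\,x)_i$ is attainable if and only if $|(\mi(\al)\,x)_i| \leq (\rad(\al)\,|x|)_i$, and in that case admissible entries $E_{i,j}$ are read off from the signs of the $x_j$ together with an intermediate-value adjustment. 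Since distinct rows involve disjoint entries of $E$, these choices can be carried out simultaneously, which proves the claim; combining it with the parametrization gives the desired equivalence.

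The step I expect to demand the most care is the constructive direction of the fixed-$x$ claim: from a vector $x$ with $|\mi(\al)\,x| \leq \rad(\al)\,|x|$ one must actually exhibit entries $E_{i,j} \in [-\rad(\al)_{i,j}, \rad(\al)_{i,j}]$ realizing $\mi(\al)\,x + E\,x = 0$. This is exactly where the symmetry of the attainable interval and its description through $\rad(\al)\,|x|$ are used; the remaining passages (the kernel reformulation and the contrapositive) are routine.
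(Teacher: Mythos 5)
Your proposal is correct. Note, though, that the paper itself offers no proof of this statement: it is quoted as a known theorem of Rohn, with the proof deferred to the cited literature (\cite{Rohn3}, \cite{Rohn4}, \cite{Shary}), so there is no in-paper argument to compare against. Your route is the classical one underlying Rohn's result (essentially the Oettli--Prager characterization of solvability specialized to $b=0$): parametrize $\al$ as $\{\mi(\al)+E : |E|\leq \rad(\al)\ \text{entrywise}\}$, observe that for $p\geq q$ failure of full rank is the existence of $A\in\al$ and $x\neq 0$ with $Ax=0$, and then verify the fixed-$x$ claim row by row using that $(Ex)_i$ sweeps exactly the symmetric interval $[-(\rad(\al)|x|)_i,(\rad(\al)|x|)_i]$. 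All steps are sound; the only place you leave something implicit, the intermediate-value adjustment, can even be made fully explicit: when $(\rad(\al)|x|)_i>0$ set
$$E_{i,j}=-\frac{(\mi(\al)\,x)_i}{(\rad(\al)\,|x|)_i}\,\rad(\al)_{i,j}\,\operatorname{sgn}(x_j),$$
which satisfies $|E_{i,j}|\leq \rad(\al)_{i,j}$ precisely because $|(\mi(\al)x)_i|\leq(\rad(\al)|x|)_i$, and gives $(Ex)_i=-(\mi(\al)x)_i$; when $(\rad(\al)|x|)_i=0$ the hypothesis forces $(\mi(\al)x)_i=0$ and $E_{i,j}=0$ works. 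With that, your argument is a complete and self-contained proof of the quoted theorem.
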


  A research area which can be connected with the theory of interval matrices 
  is the one of the partial matrices: let $K$ be a field; a partial matrix over $K$ is a 
matrix where only some of the entries are given and they are elements of 
$K$;
a completion of a partial  matrix is a specification of the unspecified entries. 
The problem of determining the maximal and the minimal rank of the completions of a partial matrix has been widely studied.
In particular, in \cite{CJRW}, Cohen,  Johnson,  Rodman and Woerdeman determined 
the maximal rank of the completions of a partial
matrix in terms of the ranks and the sizes of its maximal  
specified submatrices; see also \cite{CD}
for the proof.
 The problem of a theoretical characterization of   the minimal rank of   the completions of a partial matrix seems more difficult  and it has been solved only in some particular cases. We quote also the paper \cite{HHW}, where a criterion to say if a partial matrix has a completion of rank $1$ is established. 
  
  In \cite{Ru2} we generalized Theorem \ref{Rohn1} to general closed interval matrices, that is matrices whose entries are closed connected nonempty subsets of $\R$; obviously
  the notion of general closed interval matrices generalizes the one of partial matrices and the one of interval matrices.
  
Also for interval matrices, the problem of determining the miminal rank of the matrices contained in 
a given interval matrix seems much more difficult than the problem  of determining the 
maximal rank.
We recall that in \cite{Ru1}  we 
 determined the maximum rank of the matrices contained in a given interval matrix
and we gave  a theoretical
      characterization
 of interval matrices containing at least a  matrix of rank $1$. 
Precisely the last result is the following (where the word 
``reduced'' means  that every column and every row has at least one entry not containing $0$).

\begin{thm} \label{casopositivo}
 Let   $\al =( [\underline{\alpha}_{i,j}, \overline{\alpha}_{i,j}])_{i,j}$ be a   $p \times q $ reduced  interval matrix  with $p,q \geq 2$ and  $0  \leq \underline{\alpha}_{i,j} \leq \overline{\alpha}_{i,j}$
 for any  $i \in \{1,\dots,p\}$  and $ j \in \{1,\dots,q\}$. There exists 
 $A \in \al$ with $\rk(A)=1$ if and only if, 
 for any $h \in \N$
 with $2 \leq h \leq 2^{\min\{p,q\}-1}$, for any
  $i_1,\dots, i_h \in \{1,\dots, p\}$, for any
$j_1,\dots, j_h \in \{1,\dots, q\}$ and for any permutation  $\sigma $ on $h$ elements, we have:
\begin{equation} \label{assthm}
\underline{\alpha}_{i_1, j_1 }\dots \underline{\alpha}_{i_h, j_h} \leq \overline{\alpha}_{i_1, j_{\sigma(1)}}  \dots
 \overline{\alpha}_{i_h, j_{\sigma(h)}}.\end{equation}
\end{thm}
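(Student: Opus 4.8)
The plan is to realize a rank-one matrix of $\al$ as $A=(x_iy_j)_{i,j}$ and to turn the problem into an additive feasibility question by taking logarithms. First I would dispose of the necessity direction. If $A=(x_iy_j)\in\al$ has $\rk(A)=1$, then since $\al$ is reduced and nonnegative every row and every column of $A$ carries a strictly positive entry (the one where $\underline{\alpha}_{i,j}>0$), so no $x_i$ and no $y_j$ vanishes and we may take all $x_i,y_j>0$. For any indices $i_1,\dots,i_h$, $j_1,\dots,j_h$ and any permutation $\sigma$ the product $\prod_k x_{i_k}y_{j_k}$ equals $\prod_k x_{i_k}y_{j_{\sigma(k)}}$, because $\sigma$ merely permutes the factors $y_{j_k}$; sandwiching each factor between its own bounds gives $\prod_k\underline{\alpha}_{i_k,j_k}\le\prod_k x_{i_k}y_{j_k}=\prod_k x_{i_k}y_{j_{\sigma(k)}}\le\prod_k\overline{\alpha}_{i_k,j_{\sigma(k)}}$, which is exactly (\ref{assthm}). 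This already produces every inequality in the statement, for all $h$ in the stated range and all $\sigma$.

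For the sufficiency direction, assume all inequalities (\ref{assthm}) hold. I would first note that each $\overline{\alpha}_{i,j}$ is then strictly positive: were $\overline{\alpha}_{a,b}=0$, reducedness would supply $\underline{\alpha}_{a,c}>0$ and $\underline{\alpha}_{d,b}>0$ with $c\ne b$ and $d\ne a$, and the case $h=2$ with the transposition would fail. Writing $x_i=e^{u_i}>0$ and $y_j=e^{v_j}>0$, membership $A\in\al$ becomes the linear system $u_i+v_j\le\log\overline{\alpha}_{i,j}$ for all $i,j$, together with $u_i+v_j\ge\log\underline{\alpha}_{i,j}$ only for those $i,j$ with $\underline{\alpha}_{i,j}>0$ (the other lower bounds being vacuous). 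After the substitution $w_j=-v_j$ this is a standard system of difference constraints on the $p+q$ unknowns $u_1,\dots,u_p,w_1,\dots,w_q$, and I would invoke the classical criterion: such a system is solvable if and only if the associated weighted bipartite digraph --- with an arc $w_j\to u_i$ of weight $\log\overline{\alpha}_{i,j}$, and, whenever $\underline{\alpha}_{i,j}>0$, an arc $u_i\to w_j$ of weight $-\log\underline{\alpha}_{i,j}$ --- has no negative-weight cycle, a feasible potential being then obtained as a shortest-path vector from an adjoined source.

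The heart of the matter is to match this no-negative-cycle condition with (\ref{assthm}). A closed walk alternates between the two sides, $u_{i_1}\to w_{j_1}\to u_{i_2}\to\cdots\to u_{i_h}\to w_{j_h}\to u_{i_1}$, and summing weights shows that its nonnegativity reads $\prod_k\underline{\alpha}_{i_k,j_k}\le\prod_k\overline{\alpha}_{i_{k+1},j_k}$ (indices cyclic); reindexing the right-hand product turns this into (\ref{assthm}) for the single cyclic permutation $\sigma(\ell)=\ell-1$. Since one only needs to exclude negative \emph{simple} cycles, the relevant cases have $h\le\min\{p,q\}$ with the $i_k$ distinct and the $j_k$ distinct, and conversely an arbitrary $\sigma$ factors into disjoint cycles so that its inequality (\ref{assthm}) is the product of the single-cycle ones. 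Hence the inequalities actually needed for sufficiency are a subset of those in the statement, all lying within the range $h\le\min\{p,q\}\le 2^{\min\{p,q\}-1}$, and together with the necessity direction this yields the equivalence. I expect the main obstacle to be precisely this combinatorial translation in the presence of vanishing lower bounds: one must verify that dropping the arcs with $\underline{\alpha}_{i,j}=0$ is harmless (such a factor makes the left-hand side of (\ref{assthm}) zero, so the inequality holds automatically) and that the reduction to simple cycles survives the asymmetry between the always-present upper arcs and the sometimes-absent lower arcs.
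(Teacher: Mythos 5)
Your proposal is correct, but note that this paper does not itself prove Theorem \ref{casopositivo}: the theorem is recalled from \cite{Ru1}, and the proof there is a direct construction of the realizing rank-one matrix by successive elimination of variables, whose iterated pairing of inequalities is what produces the exponential range $h\le 2^{\min\{p,q\}-1}$ in the statement. Your route is genuinely different. The preliminary steps are all sound: positivity of all $x_i,y_j$ in the necessity direction follows from reducedness exactly as you say; strict positivity of every $\overline{\alpha}_{i,j}$ follows from the $h=2$ transposition instance of (\ref{assthm}); and the lower bounds with $\underline{\alpha}_{i,j}=0$ are indeed vacuous for a strictly positive candidate $(e^{u_i+v_j})$. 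Recasting containment as a system of difference constraints and invoking the classical feasibility criterion (no negative-weight cycle, with a feasible potential from shortest paths) is legitimate over the reals, and your bipartite cycle computation is right: nonnegativity of a simple cycle is precisely (\ref{assthm}) for the cyclic permutation $\sigma(\ell)=\ell-1$ with distinct $i_k$ and distinct $j_k$, so sufficiency only uses instances with $2\le h\le\min\{p,q\}\le 2^{\min\{p,q\}-1}$; the $h=1$ cycles, like the fixed points arising in your cycle decomposition of a general $\sigma$, are disposed of by $\underline{\alpha}_{i,j}\le\overline{\alpha}_{i,j}$, a point worth one explicit sentence. What your approach buys is in fact a sharper result than the stated one: a polynomially sized family of certificates (cyclic $\sigma$, $h\le\min\{p,q\}$, distinct row and column indices) from which, by your cycle-decomposition remark and multiplication of nonnegative inequalities, all remaining instances of (\ref{assthm}) follow formally; it also explains conceptually where the criterion comes from (feasibility of $u_i+v_j\in[\log\underline{\alpha}_{i,j},\log\overline{\alpha}_{i,j}]$ as a shortest-path potential problem). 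What the original argument buys is self-containedness: it constructs the matrix directly without appealing to the difference-constraints theorem, at the price of the exponential bound on $h$.
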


In the previous paper \cite{G-S}, the authors studied the complexity of an algorithm to decide if an interval matrix contains a rank-one matrix and proved that the problem is NP-complete.

Finally we quote another research area which can be related 
to partial matrices, to interval matrices and, more generally,
to general interval matrices: the one of the matrices with a given sign pattern; let $Q$ be a $p \times q$ matrix with entries in $\{+,-,0\}$; we say that $A \in M(p \times q, \R)$ has sign pattern $Q$ if, for any $i,j$, we have that  $a_{i,j} $ is 
positive (respectively negative, zero) if and only if $Q_{i,j}$ is 
$+$ (respectively $-,0$). Obviously the set of the matrices 
with a given sign pattern can be thought as a general interval matrices whose entries are from $\{ (0, +\infty), (-\infty,0) , [0]\}$.
There are several papers 
studying the minimal and maximal rank of the matrices with a given sign pattern, see for instance \cite{A1}, \cite{A2}, \cite{A3},  \cite{Shi}. In particular, in \cite{A1} and \cite{A2}
the authors proved that the minimum rank of the real matrices with a given sign pattern is realizable by a rational matrix
in case this minumum is at most $2$ or at least $\min\{p,q\}-2$.

As we have already said,
in this paper we generalize the  concept of interval matrices to other fields.  
We define a {\bf rational interval matrix} to be a matrix 
whose entries are intervals in $\Q$; we prove that a (real) interval $p \times q$ matrix
with the endpoints of all its entries 
  in $\Q$  contains a rank-one matrix if and only if contains a rational rank-one matrix
and contains a matrix with rank smaller than $\min\{p,q\}$ if and only if it
 contains a rational matrix with rank smaller than $\min\{p,q\}$;
from these results and from Theorem \ref{Rohn1} and Theorem \ref{casopositivo}
 we deduce immediately  a criterion to see when a rational interval matrix contains a rank-one matrix  and a criterion to see when it is full-rank, that is, all the matrices it contains are full-rank, see Section 3. 
 Moreover, in Remark \ref{KoBer}, we observe that from the papers \cite{Ber} and \cite{Ko} we can deduce that
    it is not true that, for any $r$,
 if an interval matrix   with  the endpoints of all its entries  in $\Q$  contains a rank-$r$ real matrix, then it contains   a rank-$r$ rational matrix.
 Finally, given a field $K$, we define a {\bf subset matrix over $K$} to be a matrix 
 whose entries are nonempty subsets of $K$ and  we describe  a criterion to find the maximal rank of a matrix contained in a subset matrix (see Section 4). 
 
\section{Notation and first remarks}

$\bullet$  Let $\R_{>0}$ be the set $\{x \in \R | \; x >0\}$ and
let $\R_{\geq 0}$ be the set $\{x \in \R | \; x  \geq 0\}$; we define analogously $\R_{<0}$ and $\R_{ \leq 0}$. We denote by $\I$ the set $\R -\Q$. 

$\bullet $ Throughout the paper  let $p , q \in \N \setminus \{0\}$. 

$\bullet $ Let $\Sigma_p$ be the set of the permutations on $\{1,....,p\}$. For any $\sigma \in \Sigma_p$ we denote 
   the sign of the permutation $\sigma$ by $\epsilon (\sigma) $.

$\bullet$ For any ordered multiset $J=(j_1, \dots , j_r)$,  a {\bf multiset permutation}  $\sigma(J)$ of $J$  is an ordered arrangement of  the multiset $\{j_1, \dots , j_r\}$, where each element appears as often as it does in $J$.

$\bullet$ For any field $K$, let $M(p \times q, K) $ denote the set of the $p \times q $  matrices with entries in $K$. For any $A \in M(p \times q, K)$, let $\rk(A) $ denote the rank of $A$
and let $A^{(j)}$ be the $j$-th column of $A$.

$\bullet $
For any vector space $V$ over a field $K$  and any $v_1,\dots, v_k \in V$, let $\langle v_1,\dots, v_k\rangle $ be the 
span of $v_1,\dots,v_k$.

$\bullet $
 Let   $\al $ be a $p \times q $  subset matrix
 over a field $K$.  
 Given a matrix $A \in M(p \times q, K) $, we say that  $ A \in \al $ if and only if  $a_{i,j} \in \al_{i,j} $ for any $i,j$.
  We define
$$\mrk(\al) = \min\{\rk(A)   | \; A \in \al     \},$$
$$\Mrk(\al) = \max\{\rk(A) | \; A \in \al     \}.$$
We call them respectively {\bf minimal rank} and {\bf maximal rank} of $\al$.
Moreover, we define 
$$\rkRange (\al) =  \{ \rk (A)  | \; A \in \al \};$$ 
we call it the {\bf rank range} of $\al$.

We say that an  entry of $\al$ is  {\bf degenerate} if its cardinality  is $1$.

\begin{rem}
Let $\al$ be a subset matrix over a field $K$. Observe that  $$\rkRange(\al) =
  [\mrk(\al)  , \Mrk(\al)] \cap \N.$$
\end{rem}

The proof is identical to the one of the case of interval matrices in \cite{Ru1} (Remark 3).

We defer to some classical books on interval analysis, such as \cite{Moore}, \cite{Neu} and \cite{Moore2} for the definition of sum and multiplication of two intervals. In particular, 
for any interval $\alpha$ in $\R$ and any interval $\beta$ either in 
$ \R_{>0}$ or in $\R_{<0}$,
we define $\frac{\alpha}{\beta}$ to be the set $\left\{\frac{a}{b} | \; 
a \in \alpha, \; b \in \beta \right\}$.
Obviously we can give analogous definitions for intervals in $\Q$.

\begin{defin}
Let $\al $ be an interval matrix, respectively a rational interval matrix. We say that another interval matrix (respectively rational interval matrix) $\al'$ is obtained from $\al $ by  an  {\bf elementary row operation} if it is obtained from $\al$ 
by one of the following operations (where we are considering interval arithmetic):

I) interchanging two rows,

II) multiplying a row by a nonzero real number (respectively rational number),

III)  adding to a row the multiple of another row by a real number  (respectively rational number).

In an analogous way we may define  {\bf elementary column operations}.
\end{defin}

\begin{rem} \label{op}

$\bullet$ 
Obviously the operations of the  first two kinds give an equivalence relation, but if we consider also the third kind we do not get an equivalence relation.

$\bullet$ 
Let $\al$ and $\al'$ be two interval matrices,
respectively two rational interval matrices, such that 
$\al'$ is obtained from $\al$ by elementary row (or column) operations. Then, obviously, 
\begin{equation} \label{elop}
\rkRange(\al) \subseteq  \rkRange (\al').
\end{equation}
Moreover, if $\al'$ is obtained from $\al$ only by elementary row (or column) operations of kind I or II, we have  the equality in (\ref{elop}).
\end{rem}

\begin{rem} \label{mrk0} Let $\al $ be 
a (rational) interval matrix.
 If $\al'$ is the (rational) interval matrix obtained from 
 $\al$ by deleting the columns and the rows such that all
 their entries contain $0$, we have that  $\mrk(\al)= \mrk(\al')$.
Obviously the analogous statement for $\Mrk$ does not hold.
\end{rem}

\section{Some results on rational interval matrices}

\begin{thm} \label{Q-R1}
Let  $ p \geq q$ and let   
$\al =( [\underline{\alpha}_{i,j}, \overline{\alpha}_{i,j}])_{i,j}$ be a $p \times q$ interval matrix with 
$\underline{\alpha}_{i,j} \leq \overline{\alpha}_{i,j}$ and 
 $\underline{\alpha}_{i,j}, \overline{\alpha}_{i,j} \in \Q$ for any $i,j$. 
If there exists $ A \in \alpha$ with $rk(A) <q$, then 
there exists $ B \in \alpha \cap M(p \times q, \Q)$ with $rk(B) <q$.
\end{thm}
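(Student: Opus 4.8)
The plan is to use the nontrivial kernel of $A$ and to replace its (possibly irrational) kernel direction by a rational one, via the observation that the set of admissible kernel directions is a rational polyhedral cone.

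First I would record that $\rk(A) < q$ is equivalent to the existence of a nonzero $x \in \R^q$ with $Ax = 0$, i.e. $\sum_{j=1}^q a_{i,j} x_j = 0$ for every $i$. For a candidate direction $y \in \R^q$ and each row $i$, I would consider the range of $\sum_{j} b_{i,j} y_j$ as $(b_{i,1}, \dots, b_{i,q})$ runs over the box $\prod_{j} \al_{i,j}$; by interval arithmetic this range is a closed interval $[m_i(y), M_i(y)]$. On each closed orthant determined by a sign pattern (say $y_j \ge 0$ for $j \in P$ and $y_j \le 0$ for $j \in N$) one has $m_i(y) = \sum_{j \in P} \underline{\alpha}_{i,j} y_j + \sum_{j \in N} \overline{\alpha}_{i,j} y_j$ and $M_i(y) = \sum_{j \in P} \overline{\alpha}_{i,j} y_j + \sum_{j \in N} \underline{\alpha}_{i,j} y_j$, both linear in $y$ with rational coefficients. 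The point is that the box $\prod_j \al_{i,j}$ meets the hyperplane $\{v \in \R^q : v \cdot y = 0\}$ precisely when $0 \in [m_i(y), M_i(y)]$.

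Next, I would fix the closed orthant $\bar{O}$ containing $x$ and define $$K = \{\, y \in \bar{O} : m_i(y) \le 0 \le M_i(y) \ \text{ for } i = 1, \dots, p \,\}.$$ By the formulas above, inside $\bar{O}$ the set $K$ is cut out by finitely many homogeneous linear inequalities with rational coefficients, so it is a rational polyhedral cone. Moreover $x \in K$, because $Ax = 0$ forces $0 \in [m_i(x), M_i(x)]$ for every $i$. Since $x \ne 0$, the cone $K$ is not $\{0\}$; as a rational polyhedral cone is the conical hull of finitely many rational vectors (Minkowski--Weyl), $K$ must contain a nonzero rational vector $y$.

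Finally, for this rational $y$ and each row $i$, the set $\prod_{j} \al_{i,j} \cap \{v : v \cdot y = 0\}$ is nonempty (the value $0$ lies in the range $[m_i(y), M_i(y)]$, which is attained since the box is connected) and is a bounded rational polyhedron, hence contains a rational point. Taking these points as the rows of a matrix $B$ gives $B \in \al \cap M(p \times q, \Q)$ with $By = 0$ and $y \ne 0$, whence $\rk(B) < q$, as desired. I expect the crux to be the boundary phenomenon that blocks a naive perturbation: if $0$ is an endpoint of some $[m_i(x), M_i(x)]$, then an arbitrarily small change of $x$ can push $0$ out of that interval and destroy feasibility in row $i$. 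The cone formulation is exactly what avoids this, since it delivers a rational $y$ lying inside the closed feasible cone rather than merely close to $x$; the only delicate verification is the orthant-by-orthant linearity of the $m_i$ and $M_i$, which is what makes $K$ genuinely rational and polyhedral.
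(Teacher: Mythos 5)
Your proof is correct, but it takes a genuinely different route from the paper's. The paper works on the primal side: it fixes the explicit dependence $A^{(q)}=c_1A^{(1)}+\dots+c_{q-1}A^{(q-1)}$, splits the coefficients $c_j$ and the entries $a_{i,j}$ into rational and irrational parts, and perturbs the irrational data into $\Q$ using small neighbourhoods chosen so that all products stay inside the prescribed intervals; the rows with no irrational slack (the set $X$ in the paper's notation, where every relevant entry is rational) are handled by solving the linear system exactly, via a row-echelon reduction together with Lemma \ref{sys}, which says rational solutions of a rational homogeneous system are dense among its real solutions. You instead dualize to the kernel: feasibility of a direction $y$ in row $i$ is the condition $0\in[m_i(y),M_i(y)]$, and since $m_i$, $M_i$ are linear with rational coefficients on the closed orthant of $x$, the feasible directions form a rational polyhedral cone $K\ni x$; rational Minkowski--Weyl then yields a nonzero rational $y\in K$, after which each row of $B$ is chosen independently as a rational point of the nonempty bounded rational polyhedron $\prod_j\al_{i,j}\cap\{v: v\cdot y=0\}$. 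The boundary phenomenon you correctly flag---where $0$ sits at an endpoint of $[m_i(x),M_i(x)]$ and naive perturbation fails---is exactly what forces the paper's case analysis and exact-solving device, and your cone formulation absorbs it because faces of rational polyhedra still contain rational points. The trade-off: the paper's argument is elementary and self-contained (only density of $\Q$ and continuity), while yours is shorter, decouples the row constraints completely, and makes transparent precisely where rationality of the endpoints enters (the defining inequalities of $K$ and the vertices of the boxes); the cost is invoking Weyl's rationality theorem, though even that can be softened by slicing $K$ with the rational hyperplane $\sum_{j\in P}y_j-\sum_{j\in N}y_j=1$, which produces a nonempty bounded rational polyhedron whose (rational) vertices give the desired $y$ directly.
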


\begin{proof}
We can suppose that $A^{(q)} \in \langle A^{(1)}, \ldots  A^{(q-1)} \rangle $; let $A^{(q)}  $ be equal 
to $$ c_1 A^{(1)}+ \ldots+ c_{q-1}  A^{(q-1)}  $$ 
for some $c_1,\ldots , c_{q-1} \in \R \setminus \{0\}$.
Up to swapping rows and columns, we can also suppose $c_1, \ldots, c_r \in \Q$,
$ c_{r+1}, \ldots , c_{q-1} \in \I$, $ a_{1,q}, \ldots ,
a_{k,q} \in \I$ and  $ a_{k+1,q}, \ldots ,
a_{p,q} \in \Q$.

Finally we can easily suppose that $a_{i,j} \in \Q $
for any $i=1,\ldots , k$ and $j=1,\ldots, q-1$; in
 fact:
 for any $i \in \{1, \ldots, k\}$, let $Z(i)= 
 \{j \in \{ 1, \ldots, q-1 \}| \;  a_{i,j}  \in \I\} $; if 
 for some $i \in \{1, \ldots , k \}$ the set $Z(i)$ is nonempty, we have that for any $j \in Z(i) $ the entry $\al_{i,j}$ is nondegenerate
 (since has rational endpoints and contains $a_{i,j}$, which is irrational); so there exist  neighbourhoods $U_{i,j}$ of $a_{i,j}$ 
 contained in $\al_{i,j}$  for  any $j \in Z(i) $ 
 such that 
$$ \sum_{j \in \{1, \ldots , q-1\} \setminus Z(i)} c_j a_{i,j} + \sum_{j \in Z(i)} c_j U_{i,j} \subset  \al _{i,q}  $$
(observe  that, for $ i=1, \ldots, k$, the entry 
$\al _{i,q}$ is nondegenerate, since it has rational endpoints and contains $a_{i,q}$ which is irrational); 
hence, for any $j \in Z(i)$ we can change the entry $a_{i,j} $ into an element $ \tilde{a}_{i,j}$  of $ U_{i,j} \cap \Q$ and the entry $a_{i,q}$ into
$$ \sum_{j \in \{1, \ldots , q-1\} \setminus Z(i)} c_j a_{i,j} + \sum_{j \in Z(i)} c_j \tilde{a}_{i,j} ;$$ in this way 
we get  again a matrix with the last column 
in the span of the first $q-1$ columns; moreover, no row of  the matrix we have obtained  has
all the entries but the last rational and 
the last entry irrational.  So  we can  suppose that $a_{i,j} \in \Q $
for any $i=1,\ldots , k$ and $j=1,\ldots, q-1$.

For any $ i=k+1,\ldots ,p$, we define:

$R^i =\{ j \in \{1,\ldots , r \} | \; a_{i,j} \in \Q \}$, 
 
$N^i =\{ j \in \{1,\ldots , r \} | \; a_{i,j} \in \I \}$, 
 
$\tilde{R}^i =\{ j \in \{r+1,\ldots , q-1\} | \; a_{i,j} \in \Q \}$,
 
$\tilde{N}^i =\{ j \in \{r+1,\ldots , q-1 \} | \; a_{i,j} \in \I \}$.

Moreover define  $$X=\{i \in \{k+1 , \ldots, p\} | \; 
N^i \cup \tilde{N}^i = \emptyset
\}.$$

 $\bullet$ Let 
  $i \in \{1,\ldots ,k\}$. For any $j=r+1,\ldots, q-1$, there exists a neighbourhood $V^i_j $ of $ c_j$ such that 
 \begin{equation} \label{star}
   \sum_{j=1, \ldots , r} c_j a_{i,j} + 
  \sum_{j=r+1, \ldots , q-1} V^i_j a_{i,j}  \subset    \al _{i,q}. 
  \end{equation}
 $\bullet $ Let $i \in \{k+1 , \ldots, p\} \setminus X$.
 By definition of the set $X$, there exists $ \overline{\jmath }(i) \in 
  N_i \cup \tilde{N}_i $. We consider
  neighbourhoods $V_j^i$ of $c_j$ contained either  in $\R_{<0}$ or in  $\R_{>0}$  
  for any $j  \in \{r+1, \ldots, q-1\} $ 
and neighbourhoods $U_{i,j}$ of $a_{i,j}$
contained in $\al_{i,j}$  for any 
$j  \in N_i \cup \tilde{N}_i \setminus  \{\overline{\jmath}(i)\}$  
such that 
\begin{equation} \label{star1}
-\frac{1}{c_{\overline{\jmath}(i)}} \left[ 
\sum_{j \in N_i \setminus \{\overline{\jmath}(i)\} } c_j U_{i,j}
+ \sum_{j \in \tilde{N}_i }
V^i_j U_{i,j} + \sum_{j \in \tilde{R}_i} V^i_j a_{i,j}
+ \sum_{j \in R_i} c_j a_{i,j} -a_{i,q} 
\right] \subset 
\al _{i, \overline{\jmath}(i)}
\end{equation} 
if $ c_{\overline{\jmath}(i)} \in \Q$   (i.e. $  \overline{\jmath}(i) \in \{1, \ldots r\}$) and
\begin{equation} \label{star2}
 -\frac{1}{V^i_{\overline{\jmath}(i)}} \left[ 
\sum_{j \in N_i  } c_j U_{i,j}
+ \sum_{j \in \tilde{N}_i \setminus \{\overline{\jmath}(i)\}}
V^i_j U_{i,j} + \sum_{j \in \tilde{R}_i} V^i_j a_{i,j}
+ \sum_{j \in R_i} c_j a_{i,j} -a_{i,q} 
\right] \subset 
\al _{i, \overline{\jmath}(i)}
\end{equation}
if $ c_{\overline{\jmath}(i)} \in \I$  (i.e. $  \overline{\jmath}(i) \in \{r+1, \ldots q-1\}$). 

\smallskip

  \underline{{\em Choice of the $\tilde{c}_j$ for $j=r+1, \ldots, q-1$.}}
  If $X = \emptyset$, for any $j=r+1, \ldots ,q-1$, choose $ \tilde{c}_j \in \left( \cap_{i \in \{1, \ldots, p\}} V^i_j  \right) \cap \Q$.
   
    If $X \neq \emptyset$,
   consider  the submatrix of $A$ given by 
 the rows indicized by $X$ and the columns $r+1, \ldots, q-1$ and  reduce it in  row echelon 
 form by elementary row operations; let $T$ 
 be the set of the $j \in \{r+1, \ldots, q-1\}$ corresponding to some pivot, and let $S$ be the set  $\{r+1, \ldots, q-1\} \setminus T$. For any 
 $j \in S$, choose $ \tilde{c}_j \in
 \left(  \cap_{i\in \{1, \ldots , p\} \setminus X}
V^i_j \right) \cap \Q$  in such a way that, called 
$\tilde{c}_j$ for $j \in T$ the solutions of the linear systems given by the equations 
\begin{equation} \label{sei}
\sum_{j \in R_i} c_j a_{i,j} + \sum_{j \in \tilde{R}_i}
\tilde{c}_j a_{i,j} = a_{i,q},
\end{equation}
  for $i \in X$,
we have that 
$ \tilde{c}_j \in  
\cap_{i\in \{1, \ldots , p\} \setminus X}
V^i_j $ for any $j \in T$. 

\smallskip

\underline{{\em Choice of the $\tilde{a}_{i,j} 
$ for $ i \in \{k+1,\ldots, p\} \setminus X $, $j \in N_i \cup \tilde{N}_i $.
}}
Now, 
for any $i \in \{k+1, \ldots , p\} \setminus X$, 
choose 
$\tilde{a}_{i,j} \in U_{i,j} \cap \Q$ for any $j \in N_i
\cup \tilde{N}_i \setminus \{\overline{\jmath}(i)\}$ and define $ \tilde{a}_{i, \overline{\jmath}(i) }$ to be 
\begin{equation} \label{sette}
 -\frac{1}{c_{\overline{\jmath}(i)}} \left[ 
\sum_{j \in N_i \setminus \{\overline{\jmath}(i)\} } c_j \tilde{a}_{i,j}
+ \sum_{j \in \tilde{N}_i}
\tilde{c}_j
 \tilde{a}_{i,j} + \sum_{j \in \tilde{R}_i} \tilde{c}_j a_{i,j}
+ \sum_{j \in R_i} c_j a_{i,j} -a_{i,q} 
\right] 
\end{equation}
if $ c_{\overline{\jmath}(i)} \in \Q$ (i.e. $\overline{\jmath}(i) \in N_i$),
\begin{equation} \label{otto}
-\frac{1}{\tilde{c}_{\overline{\jmath}(i)}} \left[ 
\sum_{j \in N_i  } c_j \tilde{a}_{i,j}
+ \sum_{j \in \tilde{N}_i \setminus \{\overline{\jmath}(i)\}}
\tilde{c}_j
 \tilde{a}_{i,j} + \sum_{j \in \tilde{R}_i} \tilde{c}_j a_{i,j}
+ \sum_{j \in R_i} c_j a_{i,j} -a_{i,q} 
\right] 
\end{equation}
if $ c_{\overline{\jmath}(i)} \in \I$
(i.e. $\overline{\jmath}(i) \in \tilde{N}_i$). By $( \ref{star1})$ and  $(\ref{star2})$, we have that $ \tilde{a}_{i, \overline{\jmath}(i) } \in \Q \cap \al_{i, \overline{\jmath}(i)}$.

Let $B$  be the $ p \times q $ matrix such that, 
 for every $ i=1,\ldots, p$  and $j=1, \ldots, q-1$,
 $$B_{i,j}= \left\{ \begin{array}{ll} 
 \tilde{a}_{i,j} &  \mbox{\rm if } a_{i,j} \in \I  \\ 
 a_{i,j} & \mbox{\rm if } a_{i,j} \in \Q \end{array}
 \right. $$ and such that 
$$B^{(q)}= 
\sum_{j=1, \ldots , r} c_j B^{(j)}+ \sum_{j=r+1, \ldots, q-1}
     \tilde{c}_{j} B^{(j)}.$$ 
     
     By the choice of $\tilde{c}_j$ for $ j=r+1, \ldots, q-1$ (see (\ref{sei})) and the choice 
     of $ \tilde{a}_{i,j}$ for $i \in \{k+1, \ldots, p\}
     \setminus X$, $j \in N_i \cup \tilde{N}_i$ 
     (see (\ref{sette}) and (\ref{otto})), we have 
     that $b_{i,q} =a_{i,q}$ for $i=k+1 , \ldots, p$. By the choice  of $\tilde{c}_j$ for $ j=r+1, \ldots, q-1$ such that 
$ \tilde{c}_j \in  
\cap_{i\in \{1, \ldots , p\} \setminus X}
V^i_j $  and by (\ref{star}), we get that $b_{i,q
} \in \al_{i,q}$ for $i=1,\ldots, k$.     
     So the matrix $B$ is contained in $ 
     \al \cap M(p \times q, \Q)$.
\end{proof}

From Theorem \ref{Rohn1} and Theorem
 \ref{Q-R1},
we get immediately the following corollary.

\begin{cor}
Let   $\al =( [\underline{\alpha}_{i,j}, \overline{\alpha}_{i,j}])_{i,j}$ be  a  $p \times p$  rational interval matrix, where
 $\underline{\alpha}_{i,j} \leq \overline{\alpha}_{i,j}$  for any $i,j$.
Let $Y_p=\{-1,1\}^p$ and, for any $x
 \in Y_p$, denote by $T_x$ the diagonal matrix whose diagonal is $x$.

Then  $\al$ is a full-rank rational interval matrix 
if and only if,  for each
$x,y \in Y_p$,  $$\det\Big(\mi(\al)\Big) \,\det\Big(\mi(\al) - T_x \, \rad(\al) \, T_y\Big)>0. $$
\end{cor}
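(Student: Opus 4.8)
The plan is to deduce the statement from Rohn's criterion (Theorem~\ref{Rohn1}) by comparing the rational interval matrix $\al$ with the real interval matrix having the same endpoints. Write $\al^{\R}=([\underline{\alpha}_{i,j},\overline{\alpha}_{i,j}])_{i,j}$ for the (real) $p\times p$ interval matrix whose entries are the real intervals with the same rational endpoints as the entries of $\al$. Since $\mi$ and $\rad$ depend only on the endpoints $\underline{\alpha}_{i,j},\overline{\alpha}_{i,j}$, we have $\mi(\al)=\mi(\al^{\R})$ and $\rad(\al)=\rad(\al^{\R})$; hence the determinant inequality in the statement is literally the same condition for $\al$ and for $\al^{\R}$. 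Thus it suffices to prove that $\al$ is a full-rank rational interval matrix if and only if $\al^{\R}$ is a full-rank (real) interval matrix, for then Theorem~\ref{Rohn1} applied to $\al^{\R}$ concludes the argument.

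First I would record the trivial identification of contained matrices: a rational matrix $B\in M(p\times p,\Q)$ lies in $\al$ precisely when $b_{i,j}\in[\underline{\alpha}_{i,j},\overline{\alpha}_{i,j}]$ for all $i,j$, which is exactly the condition $B\in\al^{\R}$ together with $B$ being rational; in other words the set of matrices contained in $\al$ equals $\al^{\R}\cap M(p\times p,\Q)$. Consequently, if $\al^{\R}$ is full-rank, then every matrix in $\al^{\R}$ has rank $p$, and in particular every matrix contained in $\al$ has rank $p$, so $\al$ is full-rank. This is the easy direction.

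For the converse I would argue by contraposition, and this is where Theorem~\ref{Q-R1} enters. Suppose $\al^{\R}$ is not full-rank, so there exists a real matrix $A\in\al^{\R}$ with $\rk(A)<p$. Applying Theorem~\ref{Q-R1} in the square case $p=q$ to the interval matrix $\al^{\R}$ (whose endpoints are rational by hypothesis) yields a rational matrix $B\in\al^{\R}\cap M(p\times p,\Q)$ with $\rk(B)<p$. By the identification above, $B$ is a matrix contained in $\al$ of rank less than $p$, so $\al$ is not a full-rank rational interval matrix. This establishes the remaining implication and hence the claimed equivalence.

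Putting the two halves together, $\al$ is full-rank as a rational interval matrix exactly when $\al^{\R}$ is full-rank as a real interval matrix, and Theorem~\ref{Rohn1} rewrites the latter as the stated determinant condition. I expect no serious obstacle here: all of the genuine difficulty is already packaged inside Theorem~\ref{Q-R1}, and what remains is the bookkeeping check that ``contained in $\al$'' and ``rational and contained in $\al^{\R}$'' describe the same set of matrices, so that rank-deficiency transfers freely between the two settings.
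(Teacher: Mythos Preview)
Your argument is correct and is exactly the approach the paper intends: the corollary is stated as an immediate consequence of Theorem~\ref{Rohn1} and Theorem~\ref{Q-R1}, and your write-up simply makes explicit the passage through the real interval matrix $\al^{\R}$ with the same endpoints, using Theorem~\ref{Q-R1} for the nontrivial implication.
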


Before stating the second theorem, we enunciate a lemma that will be useful in the proof of the theorem.

\begin{lem} \label{sys}
Let $A \in M(m \times n, \Q)$ for some $m,n \in \N  \setminus \{0\}$.
Let $c \in \R^n \setminus \{0\}$ be such that $Ac=0$.
Then, for every $V$ neighbourhood of $c$,
we can find $ \tilde{c} \in V \cap \Q^n$ such that $A \tilde{c}=0$. If, in addition,  $c \in (\R \setminus \{0\})^n$, we can find  $\tilde{c}$  
in $V \cap (\Q \setminus \{0\})^n$  such that $A \tilde{c}=0$.
\end{lem}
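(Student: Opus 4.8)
The plan is to exploit that $\ker A$ is a $\Q$-rational subspace of $\R^n$ and that its rational points are dense in it, so that $c$ can be approximated arbitrarily well by rational vectors that still lie in the kernel.

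First I would produce a basis of $\ker A$ consisting of rational vectors. Since $A \in M(m \times n, \Q)$, solving the homogeneous system $Ax = 0$ by Gaussian elimination over $\Q$ yields a basis $v_1, \dots, v_d \in \Q^n$ of $\ker A$, where $d = n - \rk(A)$. Because these vectors have rational entries, every $\Q$-linear combination $\sum_{i} \mu_i v_i$ with $\mu_i \in \Q$ automatically lies in $\Q^n \cap \ker A$.

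Then, since $c \in \ker A$, I would write $c = \sum_{i=1}^d \lambda_i v_i$ with $\lambda_1, \dots, \lambda_d \in \R$ and consider the linear (hence continuous) map $\Phi \colon \R^d \to \R^n$ given by $\Phi(\mu_1, \dots, \mu_d) = \sum_i \mu_i v_i$, which satisfies $\Phi(\lambda) = c$ and $\Phi(\Q^d) \subseteq \Q^n \cap \ker A$. Given $V$, I would use the density of $\Q$ in $\R$ to pick $\mu = (\mu_1, \dots, \mu_d) \in \Q^d$ close enough to $\lambda$ that $\tilde c := \Phi(\mu) \in V$; then $\tilde c \in V \cap \Q^n$ and $A \tilde c = 0$, which gives the first claim. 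For the refinement, if every coordinate of $c$ is nonzero, I would note that the set $W = \{x \in \R^n : x_j \neq 0 \text{ for all } j\}$ is open and contains $c$, so that $V \cap W$ is again an open neighbourhood of $c$; applying the same approximation with $V \cap W$ in place of $V$ produces $\tilde c \in V \cap (\Q \setminus \{0\})^n$ with $A \tilde c = 0$.

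The argument is at bottom a density statement, so I do not expect a serious obstacle; the only points that need care are the existence of a rational basis of the kernel — which is precisely what Gaussian elimination over $\Q$ delivers — and arranging that the rational approximation lands simultaneously inside $V$ and away from the coordinate hyperplanes, both of which follow from the continuity of $\Phi$ and the openness of the neighbourhoods involved.
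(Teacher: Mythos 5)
Your proposal is correct and is essentially the paper's own argument: the paper also reduces $A$ to row echelon form over $\Q$ and parametrizes $\ker A$ rationally (by the free variables, which amounts to your rational basis $v_1,\dots,v_d$), then uses continuity of the parametrization together with density of $\Q^d$ in $\R^d$ to land a rational kernel vector inside $V$. Your handling of the refinement, shrinking $V$ to $V \cap \{x : x_j \neq 0 \ \forall j\}$, is likewise exactly the paper's final step.
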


\begin{proof}
Let $\overline{A}$ be a matrix in row echelon form obtained from $A$ by elementary row operations. We can suppose
that the columns containing the pivots are the first $k$. Since $c$ is nonzero and $Ac=0$, we have that $k <n$.
Write $c= \left( 
\begin{array}{c} c'   \\ c'' \end{array}
\right)$ with $c'$ given by the first $k$ entries of $c$ and $c''$ given by the last $n-k$ entries and let $V'$ and $V''$ be respectively neighbourhood of $c'$ and $c''$ such that $V' \times V''$ is contained in $V$. 
There exists a neighbourhood $U$ of $c''$ 
contained in $V''$ such that, if $ b'' \in U$ and $\left( 
\begin{array}{c} b'  \\ b'' \end{array}
\right)$ is the solution of the linear system
$\overline{A} x =0$ with vector of the last $n-k$ entries equal to $ b''$, we have that 
$b' \in V'$.  
So if we take $ \tilde{c}'' \in U \cap \Q^{n-k}$
 and $\left( 
\begin{array}{c}  \tilde{c}'   \\ \tilde{c}'' \end{array}
\right)$ is the solution of the linear system
$\overline{A} x =0$ with vector of the last $n-k$ entries equal to $ \tilde{c}''$, we have that $\left( 
\begin{array}{c} \tilde{c}'   \\ \tilde{c}'' \end{array}
\right) \in V \cap \Q^n $. 

Finally, the last statement is obvious, because, if  $c \in (\R \setminus \{0\})^n$, we can find a neighbourhood $W$ of $c$ contained in  
$V \cap (\R \setminus \{0\})^n$ and, by applying the previous statement to $W$,  
 we get 
$\tilde{c} \in W \cap \Q^n
$  such that $A \tilde{c}=0$, thus 
$\tilde{c} \in V \cap (\Q \setminus \{0\})^n
$  and $A \tilde{c}=0$. 
\end{proof}

\begin{thm} \label{Q-R2}
Let  $ p \geq q$ and let   
$\al =( [\underline{\alpha}_{i,j}, \overline{\alpha}_{i,j}])_{i,j}$ be a $p \times q$ interval matrix with 
$\underline{\alpha}_{i,j} \leq  \overline{\alpha}_{i,j} $
and $\underline{\alpha}_{i,j}, \overline{\alpha}_{i,j} \in \Q$ for any $i,j$. 
If there exists $ A \in \alpha$ with $rk(A) =1$, then 
there exists $ B \in \alpha \cap M(p \times q, \Q)$ with $rk(B) =1$.
\end{thm}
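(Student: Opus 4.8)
The plan is to factor the rank-one matrix through a reference column, to leave untouched the entries that are already rational, and to reduce the rationalization of everything else to a single rational homogeneous linear system, which is exactly what Lemma~\ref{sys} is designed to solve. Since $\rk(A)=1$, after permuting the columns I may assume $A^{(1)}\neq 0$ and $A^{(j)}=c_j A^{(1)}$ for suitable $c_j\in\R$ with $c_1=1$. A row $i$ with $a_{i,1}=0$ is identically zero, so $0\in\al_{i,j}$ for all $j$ and I keep that row zero in $B$; a column $j$ with $c_j=0$ is zero and I keep it zero. Thus I may assume $a_{i,1}\neq 0$ for all $i$ and $c_j\neq 0$ for all $j$, so that every $a_{i,j}=c_j a_{i,1}\neq 0$. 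The key remark is that if I produce a rational vector $w'=(w'_i)_i$ close to $w:=A^{(1)}$ and rational numbers $\tilde c_j$ close to $c_j$ (with $\tilde c_1=1$) such that $\tilde c_j w'_i\in\al_{i,j}$ for all $i,j$, then the matrix $B$ with $B^{(j)}=\tilde c_j w'$ is automatically rational, has rank $\le 1$, and has rank exactly $1$ as soon as $w'\neq 0$; so the whole problem becomes the rationalization of the pair $(w,(c_j)_j)$ inside $\al$.

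Next I classify the indices: set $R=\{i:a_{i,1}\in\Q\}$, $N=\{i:a_{i,1}\in\I\}$, $P=\{j:c_j\in\Q\}$, $P'=\{j:c_j\in\I\}$ (so $1\in P$). For $i\in R$ I keep $w'_i=a_{i,1}$ and for $j\in P$ I keep $\tilde c_j=c_j$, so every entry with $(i,j)\in R\times P$ is unchanged and stays in $\al$. Call an entry \emph{rigid} if $a_{i,j}$ is an endpoint of $\al_{i,j}$ (this includes the degenerate entries, where the two endpoints coincide) and \emph{flexible} otherwise. A rigid entry has $a_{i,j}\in\Q$; since $a_{i,j}=c_j a_{i,1}$ and the product of a nonzero rational by an irrational is irrational, every rigid entry lies in $(R\times P)\cup(N\times P')$. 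The entries in $R\times P$ are already fixed, so the only constraints left to arrange are the rigid entries with $i\in N$, $j\in P'$, together with keeping the flexible entries inside their open intervals.

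Now comes the heart of the argument: the rigid entries with $i\in N,\ j\in P'$ become a rational homogeneous linear system in the unknowns $(w'_i)_{i\in N}$. For each column $j\in P'$ carrying a rigid entry, fix one such entry $(i_0(j),j)$; for every other rigid entry $(i,j)$ in that column one has $w_i/w_{i_0(j)}=a_{i,j}/a_{i_0(j),j}$, a quotient of two nonzero rationals and hence rational, which gives the equation $a_{i_0(j),j}\,x_i-a_{i,j}\,x_{i_0(j)}=0$ with rational coefficients, satisfied by $x=(w_i)_{i\in N}$. Collecting these into a rational system $Mx=0$ and applying Lemma~\ref{sys}, I get, as close as I wish to $(w_i)_{i\in N}$, a rational solution $(w'_i)_{i\in N}$ with all entries nonzero. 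I then set $\tilde c_j=a_{i_0(j),j}/w'_{i_0(j)}\in\Q$ for the rigid columns $j\in P'$ (well defined, i.e.\ independent of the chosen rigid entry, precisely because the above equations hold), and I pick $\tilde c_j\in\Q$ close to $c_j$ for the remaining columns of $P'$. By construction $\tilde c_j w'_i=a_{i,j}$ for every rigid entry, so these products lie in $\al_{i,j}$; and since $w'$ and the $\tilde c_j$ are as close as desired to $w$ and the $c_j$, continuity of multiplication keeps every flexible product strictly inside its interval, once the (finitely many) neighbourhoods are chosen small enough. As $w'$ has all nonzero entries we get $\rk(B)=1$, and $B\in\al\cap M(p\times q,\Q)$.

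The step I expect to be the main obstacle is the treatment of the rigid entries: an endpoint value is a priori a one-sided (inequality) constraint that a small perturbation might violate, and one could fear being pinched exactly onto the irrational value $c_j$, with no rational point available. What rescues the argument is that along a rank-one matrix the relevant ratios $w_i/w_{i_0}$ are automatically rational, which lets me impose the rigid constraints as \emph{equalities} through a rational linear system; converting that system into a nearby rational solution with nonzero entries is exactly the content of Lemma~\ref{sys}. The neighbourhood bookkeeping ensuring that the flexible entries stay interior is then routine, in the spirit of the proof of Theorem~\ref{Q-R1}.
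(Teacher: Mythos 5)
Your proof is correct, and it rests on the same skeleton as the paper's: factor the rank-one matrix through its first column, observe that every irrational entry lies in the open interior of its interval because the endpoints are rational, encode the genuinely binding rational constraints as a homogeneous linear system with rational coefficients, and invoke Lemma~\ref{sys} to pass to a nearby rational solution with nonzero entries. The genuine difference is that you set the system up in the dual variables. The paper applies Lemma~\ref{sys} to the coefficient vector: it freezes \emph{every} rational entry $a_{i,j}$ lying in a column with irrational $c_j$ (not only the endpoint ones), which produces the cross-column compatibility equations $a_{i,j}\,\tilde{c}_{j'}-a_{i,j'}\,\tilde{c}_j=0$ (condition (2) there) together with the membership conditions (1) and (3), and then reconstructs the first column $b_{i,1}$ by a three-case definition, verified case by case against each $\al_{i,j}$. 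You instead apply the lemma to the first-column entries $(w_i)_{i\in N}$, with within-column equations $a_{i_0(j),j}\,x_i-a_{i,j}\,x_{i_0(j)}=0$ coming only from entries pinned at an endpoint, and recover the multipliers $\tilde{c}_j=a_{i_0(j),j}/w'_{i_0(j)}$ afterwards. Your route buys a leaner case analysis: the observation that rigid entries can only occur in $(R\times P)\cup(N\times P')$ replaces the paper's conditions (1)--(3) and its three-case verification of $b_{i,1}$, and freezing only endpoint entries (interior rational entries may drift, since rationality of $B$ is automatic from $B^{(j)}=\tilde{c}_j w'$ with both factors rational) shrinks the linear system fed to Lemma~\ref{sys}; the paper's choice of freezing all rational entries in irrational-coefficient columns avoids singling out endpoints but pays with heavier neighbourhood bookkeeping. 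Your explicit disposal of zero rows and columns, and the normalization $\tilde{c}_1=1$ guaranteeing $\rk(B)=1$ whenever $w'\neq 0$, also tidy up a step the paper handles with an unelaborated ``we can suppose.'' All the verifications in your argument check out: rigid entries are rational and nonzero after the reduction, so the ratios $a_{i,j}/a_{i_0(j),j}$ are indeed rational, the true vector $(w_i)_{i\in N}$ solves the system and has no zero coordinates, so the second part of Lemma~\ref{sys} applies exactly as you use it, and the derived $\tilde{c}_j$ are rational, nonzero, and converge to $c_j$ as $w'\to w$, so finitely many continuity conditions keep all flexible products inside their (closed) intervals.
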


\begin{proof}
We can suppose that every entry of $A^{(1)}$ is nonzero
and,  for $j=2, \ldots ,q$, we have that   $  A^{(j)} =c_j   A^{(1)}  $ for some $c_j \in \R
\setminus \{0\}$. For any $i \in \{1,\ldots , p\}$ such that 
$\al_{i,1}$ is nondegenerate, let 
$\tilde{\al}_{i,1}$ be a closed nondegenerate  interval neighbourhood of $a_{i,1}$, contained either in $
\al_{i,1} \cap \R_{>0}$ or  in $
\al_{i,1} \cap \R_{<0}$.

We can suppose also that 
$c_2, \ldots, c_k \in \I$ and 
$c_{k+1}, \ldots, c_q \in \Q$.

For any $ j=2, \ldots, q$, let $ I_j= \{i \in \{1, \ldots , p\} | \; a_{i,j} \in \I \}$ and let $ Q_j= \{i \in \{1, \ldots , p\} | \; a_{i,j} \in \Q \}$.

Let $j  \in\{ 2, \ldots , q\}$ and $ i\in I_j$; obviously  $\al_{i,j}$ is nondegenerate, because it has rational endpoints and contains $a_{i,j}$ which is irrational; 

if $a_{i,1} \in \I$, we define
$A_i^j $ to be a neighbourhood of $a_{i,1}$ 
contained either in $ \al_{i,1} \cap \R_{>0}$ or in  $ \al_{i,1} \cap \R_{<0}$, and, 
if $c_j \in \I$ (i.e. $ j \in \{2, \ldots,k\}$), we define
$V_i^j $ to be a neighbourhood of $c_j$, 
such that: 
$$
\begin{array}{ll}
 V_i^j a_{i,1}\subset \al_{i,j} & \mbox{\rm if }  \; a_{i,1} \in \Q
\;\; \mbox{\rm  and } \; c_j \in \I , \\
c_j A_i^j  \subset \al_{i,j}&   \mbox{\rm if }  \; a_{i,1} \in \I
\;\;  \mbox{\rm and } \; c_j \in \Q,  \\
 V_i^j A_i^j  \subset \al_{i,j} & \mbox{\rm if } \;  a_{i,1} \in \I
\;\;  \mbox{\rm and } \; c_j \in \I.
\end{array}
$$

For any $j=2, \ldots , k$, let $\tilde{c}_j $ be such that 
\medskip

(1) $\tilde{c}_j  \in \left( \cap_{i \in I_j} V^j_i \right) \cap  \left(   \cap_{i \in Q_j} \frac{ a_{i,j}}{\tilde{\al}_{i,1}} \right)  \cap (\Q \setminus \{0\})$

(observe that, if $i \in Q_j$, then, since $a_{i,j} \in \Q$ and $c_j \in \I$, we have that $a_{i,1}  \in \I$, so $\al_{i,1}$ is nondegenerate),

\medskip
(2) $ \frac{a_{i,j}}{ \tilde{c}_{j}}  = \frac{a_{i,j'}}{ \tilde{c}_{j'}} $ for any $i \in \{1,\ldots , p\}$ and $ j,j' \in \{2,\ldots, k \} $ such that $ i \in Q_j \cap Q_{j'}$,

\medskip
(3) $\tilde{c}_j \in \frac{a_{i,j}}{A_i^{j'}}$  for any $i \in \{1,\ldots , p\}$, $ j \in \{2,\ldots, k \}$,
$j' \in \{2,\ldots, q \} $ such that $ i \in Q_j \cap I_{j'}$.

By Lemma  \ref{sys}, we can 
find $\tilde{c}_j$ satisfying (1),(2),(3) because $V_i^j$ for $i \in I_j$,
$ \frac{ a_{i,j}}{\tilde{\al}_{i,1}}$ for $i \in Q_j$ 
and $\frac{a_{i,j}}{A_i^{j'}}$ for 
$ i \in Q_j \cap I_{j'}$ are neighbourhoods of 
$c_j$ and the equations in (2)
give a homogeneous linear system  in the variables $\tilde{c}_j $ satisfied by the $c_j$. 

\medskip
We define $B$ to be the matrix such that, for 
any $i=1, \ldots , p$, {\small
$$b_{i,1} = 
\left\{ 
\begin{array}{lll}
\frac{a_{i,j}}{ \tilde{c}_j}  & \mbox{\rm if } a_{i,1} \in \I \; \mbox{\rm and } i \in Q_j  \; \mbox{\rm for some  } j\in\{2, \ldots, k\}  , & (1^{st} \;  \mbox{\rm CASE})\\
 \mbox{\rm an element of  } \cap_{j=2,\ldots, q} A_i^j  \cap \Q & \mbox{\rm if } a_{i,1} \in \I \; \mbox{\rm and } i \in I_j \;  \forall j \in\{2, \ldots, q\} , 
  & (2^{nd} \;  \mbox{\rm  CASE})\\
a_{i,1} &  \mbox{\rm if  } a_{i,1}  \in \Q  & (3^{rd} \; \mbox{\rm  CASE})
\end{array}  
\right.
$$ }
and such that 
$$B^{(j)} = \left\{ 
\begin{array}{ll}
\tilde{c}_j B^{(1)}  &  \mbox{\rm for } j=2,\ldots , k ,\\
c_j B^{(1)}  &  \mbox{\rm for } j=k+1,\ldots , q. 
\end{array}
\right.$$

Observe that in the definition of $b_{i,1}$, the 
$1^{st}$ case and the $2^{nd}$ case 
cover all the case $ a_{i,1} \in \I$, because,
if $  a_{i,1} \in \I$ and $i \in Q_j$ for some 
$j \in \{2,\ldots, q\}$, then $ c_j \in \I$, so $ j \in \{2, \ldots, k\} $.

Observe also  that the definition of $b_{i,1}$ in the $1^{st}$ case is good by condition (2). Moreover  $ b_{i,1} \in \Q$ for any $i \in \{1, \ldots ,p \}$  and, 
finally,  $b_{i,1}$ is an element of $\al_{i,1}$: 
in the $1^{st}$ case, this follows from condition (1), in the other cases it is obvious.

Now we want to prove that 
$\tilde{c}_j B^{(1)} \in \al^{(j)}$ for $j=2, \ldots, k$ and that 
$c_j B^{(1)} \in \al^{(j)}$ for $j=k+1, \ldots, q$.

$\bullet$ 
First, let us prove that 
$\tilde{c}_j b_{i,1} \in \al_{i,j}$ for $j=2, \ldots, k$, $i =1, \ldots, p$. Let us fix $ i \in \{1, \ldots, p\}$.

$1^{st} $ CASE. In this case $a_{i,1} \in \I$,
$i \in Q_l$ for some $l \in \{2, \ldots, k\}$ and $b_{i,1}$ is defined to be $\frac{a_{i,l}}{ \tilde{c}_l} $; 
therefore, for $j=2, \ldots, k$, 
$$\tilde{c}_j b_{i,1} = \tilde{c}_j   \frac{a_{i,l}}{ \tilde{c}_l} = \, a_{i,j} \in \al_{i,j}$$
if $i \in Q_j$ (where the second equality holds by condition (2)), and
$$\tilde{c}_j b_{i,1} = \tilde{c}_j   \frac{a_{i,l}}{ \tilde{c}_l} \in  V_i^j A_i^j  \subset \al_{i,j}$$
if $i \in I_j$ (where the first  inclusion holds by conditions (1) and (3) and the second by the definition of $  V_i^j $ and $ A_i^j $).

$2^{nd}$ CASE. In this case $a_{i,1} \in \I$,
$i \in I_j$ for any $j \in \{2, \ldots,q\}$ and $b_{i,1}$ is defined to be a rational element of $\cap_{j=2,\ldots, q} A_i^j $; hence, for
$j=2, \ldots, k$, 
$$\tilde{c}_j b_{i,1}  \in  V_i^j A_i^j  \subset \al_{i,j}.$$

$3^{rd}$ CASE. In this case $a_{i,1} \in \Q$ and $b_{i,1}$ is defined to be $a_{i,1}$;  so, for any $j \in \{2, \ldots,k\}$, we get:
$$\tilde{c}_j b_{i,1}  = \tilde{c}_j a_{i,1}   
\in  V_i^j a_{i,1}  \subset \al_{i,j},$$
where the first inclusion holds by condition $(1)$ (observe that, since $c_j \in \I $ 
and $ a_{i,1} \in \Q$, we have that $a_{i,j} \in \I$, thus $i \in \I_j$) and the second by the definition of $V_i^j$.
 
$\bullet$ 
Finally, let us prove that 
$c_j b_{i,1} \in \al_{i,j}$ for $j=k+1, \ldots, q$,
$i=1, \ldots, p$.
Fix $i \in \{1, \ldots, p\}$.

$1^{st} $ CASE. In this case $a_{i,1} \in \I$,
$i \in Q_l$ for some $l \in \{2, \ldots, k\}$ and $b_{i,1}$ is defined to be $\frac{a_{i,l}}{ \tilde{c}_l} $; so, for $j=k+1, \ldots, q$, 
$$c_j b_{i,1} = c_j   \frac{a_{i,l}}{ \tilde{c}_l} \in
c_j  A_i^j \subset \al_{i,j},$$
 where the first inclusion  holds by condition (3) since $i \in Q_l \cap I_j$  and the last inclusion holds by definition
 of $A_i^j$.

$2^{nd}$ CASE. In this case $a_{i,1} \in \I$,
$i \in I_j$ for any $j \in \{2, \ldots, q\}$ and $b_{i,1}$ is defined to be a rational element of $\cap_{j=2,\ldots, q} A_i^j $;  hence, for $j=k+1, \ldots, q$, 
$$c_j b_{i,1}  \in c_j A_i^j  \subset \al_{i,j},$$
where the last inclusion holds by definition of 
$A_i^j$.

$3^{rd}$ CASE. In this case $a_{i,1} \in \Q$ and $b_{i,1}$ is defined to be $a_{i,1}$; 
so,  for $j=k+1, \ldots, q$,  we have:
$$c_j b_{i,1}  = c_j a_{i,1}   
=a_{i,j}  \in \al_{i,j}.$$

\end{proof}

Theorem \ref{casopositivo} and Theorem \ref{Q-R2} imply obviously  the following corollary.

\begin{cor} \label{casopositivoQ}
 Let   $\al =( [\underline{\alpha}_{i,j}, \overline{\alpha}_{i,j}])_{i,j}$ be a   $p \times q $ reduced rational interval matrix  with $p,q \geq 2$ and  $0  \leq \underline{\alpha}_{i,j} \leq \overline{\alpha}_{i,j}$
 for any  $i \in \{1,\dots,p\}$  and $ j \in \{1,\dots,q\}$. There exists 
 $A \in \al$ with $\rk(A)=1$ if and only if, 
 for any $h \in \N$
 with $2 \leq h \leq 2^{\min\{p,q\}-1}$, for any
  $i_1,\dots, i_h \in \{1,\dots, p\}$, for any
$j_1,\dots, j_h \in \{1,\dots, q\}$ and for any $\sigma \in \Sigma_h$, we have:
$$
\underline{\alpha}_{i_1, j_1 }\dots \underline{\alpha}_{i_h, j_h} \leq \overline{\alpha}_{i_1, j_{\sigma(1)}}  \dots
 \overline{\alpha}_{i_h, j_{\sigma(h)}}.$$
\end{cor}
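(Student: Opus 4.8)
The plan is to exploit the rank-one structure. After permuting rows we may assume the first column $A^{(1)}$ has all entries nonzero, and then every column is a scalar multiple of it, say $A^{(j)} = c_j A^{(1)}$ with $c_j \in \R \setminus \{0\}$ (and $c_1 = 1$). The genuine unknowns are thus the vector $u = A^{(1)} \in \R^p$ together with the ratios $c_2, \ldots, c_q$, and the membership $A \in \al$ becomes the system of bilinear constraints $u_i c_j \in \al_{i,j}$. My goal is to move $(u,c)$ to a \emph{rational} point of this solution set without violating any constraint, which then yields the desired rational rank-one matrix $B=(u_i c_j)_{i,j}$.

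First I would record where there is room to move. If $a_{i,j}$ is irrational then, the endpoints of $\al_{i,j}$ being rational, $a_{i,j}$ lies in the \emph{interior} of a nondegenerate interval; hence the product $u_i c_j$ may be perturbed slightly without leaving $\al_{i,j}$, and I can fix once and for all open neighbourhoods of $a_{i,1}$ and of $c_j$, the first of a constant sign (using $a_{i,1}\neq 0$, so that division is well behaved), whose products stay inside $\al_{i,j}$. Splitting the ratios as $c_2, \ldots, c_k \in \I$ and $c_{k+1}, \ldots, c_q \in \Q$, and introducing for each column $j$ the row-index sets $Q_j = \{ i \mid a_{i,j} \in \Q\}$ and $I_j = \{ i \mid a_{i,j} \in \I\}$, I plan to keep the rational ratios $c_{k+1}, \ldots, c_q$ unchanged and to replace each irrational $c_j$ (for $2 \le j \le k$) by a nearby rational $\tilde c_j$, and likewise to replace each irrational entry $a_{i,1}$ by a nearby rational $b_{i,1}$.

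The main obstacle is the multiplicative coupling: perturbing one $u_i$ disturbs a whole row of $B$ and perturbing one $c_j$ disturbs a whole column, so the choices are not independent. The binding case is a row $i$ with $a_{i,1}\in\I$ but $a_{i,j}\in\Q$ for some $j$ (which forces $c_j\in\I$); there $b_{i,1}$ is forced to equal $a_{i,j}/\tilde c_j$ in order to reproduce the rational entry $a_{i,j}$ exactly, and if $i$ lies in several such $Q_j$ these prescriptions must agree. I would translate the agreement conditions $a_{i,j}/\tilde c_j = a_{i,j'}/\tilde c_{j'}$ (for $i \in Q_j \cap Q_{j'}$) into a homogeneous linear system with rational coefficients in the unknowns $\tilde c_j$, a system already solved by the true ratios. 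By Lemma \ref{sys} this system admits a rational solution arbitrarily close to $(c_j)_j$; choosing it inside all the previously fixed neighbourhoods simultaneously guarantees that every perturbed product $\tilde c_j b_{i,1}$ still lands in $\al_{i,j}$.

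Finally I would assemble $B$ explicitly: set $b_{i,1} = a_{i,1}$ when $a_{i,1}\in\Q$; set $b_{i,1} = a_{i,j}/\tilde c_j$ in the forced case above (well defined thanks to the linear system); and when $a_{i,1}\in\I$ with $a_{i,j}\in\I$ for every $j$, take $b_{i,1}$ to be any rational point in the intersection of the relevant neighbourhoods. Then define $B^{(j)} = \tilde c_j B^{(1)}$ for $2 \le j \le k$ and $B^{(j)} = c_j B^{(1)}$ for $k < j \le q$, so that $B$ is rational of rank one by construction. The remaining work, which I expect to be the longest but essentially routine part, is the case-by-case check that $\tilde c_j b_{i,1}\in\al_{i,j}$ and $c_j b_{i,1}\in\al_{i,j}$ for all $i,j$, distinguishing whether $a_{i,1}$ and $a_{i,j}$ are rational or irrational; each case closes either by one of the neighbourhood inclusions or by an exact cancellation coming from the linear system.
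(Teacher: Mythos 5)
Your construction reproduces, essentially step for step, the paper's proof of Theorem \ref{Q-R2}: the same reduction to $A^{(j)}=c_jA^{(1)}$ with the ratios split into irrational $c_2,\dots,c_k$ and rational $c_{k+1},\dots,c_q$, the same sets $Q_j$ and $I_j$, the same forced choice $b_{i,1}=a_{i,j}/\tilde{c}_j$ made consistent through a homogeneous rational linear system solved near $(c_j)_j$ by Lemma \ref{sys}, and the same case-by-case verification. That is indeed the one nontrivial ingredient behind the corollary, and on this part your proposal is sound.

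However, as a proof of the \emph{stated} corollary there is a genuine omission: the corollary is an equivalence with the family of inequalities $\underline{\alpha}_{i_1,j_1}\cdots\underline{\alpha}_{i_h,j_h}\leq\overline{\alpha}_{i_1,j_{\sigma(1)}}\cdots\overline{\alpha}_{i_h,j_{\sigma(h)}}$, and these inequalities never appear in your argument, so neither direction of the ``if and only if'' is actually established. What you have proved is the bridge statement (a real rank-one matrix in $\al$ can be replaced by a rational one), which is Theorem \ref{Q-R2}; the paper then gets the corollary by combining it with the real criterion of Theorem \ref{casopositivo}. You need to add this combination explicitly: if the inequalities hold, Theorem \ref{casopositivo}, applied to $\al$ viewed as a real interval matrix with the same (rational) endpoints, yields a real rank-one matrix in it, which your construction converts into a rational rank-one matrix, hence an element of the rational interval matrix $\al$; conversely, a rational rank-one matrix in $\al$ is in particular a real matrix contained in the real interval matrix, so the ``only if'' direction of Theorem \ref{casopositivo} forces the inequalities. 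A further small nit: permuting \emph{rows} does not make every entry of $A^{(1)}$ nonzero; rather, one permutes columns so that $A^{(1)}$ is a nonzero column and observes that in a rank-one matrix any row with $a_{i,1}=0$ is identically zero, hence already rational and can be kept at zero (since $0=a_{i,j}\in\al_{i,j}$ there), and similarly for zero columns -- the same gloss the paper makes with ``we can suppose''.
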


Observe that,
as for (real) interval matrices (see Remarks 8 and 9 in \cite{Ru1}),
 to study when a reduced rational interval matrix contains a rank-one matrix it is sufficient to study the problem for a reduced rational interval matrix $ \al $,  
with $\al_{i,j} \subseteq \R_{\geq 0}$
 for every $i,j $.

In fact:
 let $\al$ be a $p \times q$ rational interval matrix.   Let $i \in \{1,\dots,p\}$ and $ j \in \{1,\dots,q\}$ be
 such that  $\underline{\alpha}_{i,j} \leq 0 
 \leq   \overline{\alpha}_{i,j}$.
Define  $ \al'$ and $ \al''$  to be the 
rational  interval matrices  such that $ \al'_{i,j}=[ \underline{\alpha}_{i,j},0]$,
 $  \al''_{i,j}=[0, \overline{\alpha}_{i,j}]$ and 
$\al'_{t,s}= \al''_{t,s} = \al_{t,s}$ for any $(t,s) \neq (i,j)$ (observe that obviously the definition of $\al'$ and $\al''$ do depend on $i,j$ we have fixed).
 Then 
 $$ \{A \in \al \} = \{A \in \al' \} \cup 
  \{A \in \al'' \};$$
 hence there exists $A \in \al $ with $\rk(A) =r$, for any $r \in \N$,
if and only if  either 
there exists $A \in \al' $ with $\rk(A) =r$ or 
there exists $A \in \al'' $ with $\rk(A) =r$.
In particular, to study whether a rational interval matrix
  $\al $
contains a rank-$r$ matrix, it is sufficient to consider the case where,  for any $i,j$, either $\al_{i,j} \subseteq \R_{\geq 0}$ or $\al_{i,j} \subseteq \R_{\leq 0}$. 
Observe that splitting every entry of $\al $ 
into the nonnegative part and the nonpositive part  can give $2^{p,q}$ matrices in the worst case.

 Moreover, by Remark
\ref{op}, we can suppose $\al_{i,j} \subseteq \R_{\geq 0}$ for every $(i,j) $ 
such that  either $i$ or $j$ is equal to $1$.
Finally for such a matrix $\al$,  
 if there exists $(i,j)$ such that $\al_{i,j} \subseteq
\R_{< 0} $, then $\al $ does not contain a rank-one matrix. 
Otherwise, that is $\overline{\alpha}_{i,j} \geq 0$ for any $i,j$,
 define $\hat{\al}$ to be the rational interval matrix  such that 
 $$\hat{\al}_{i,j} =
[ \max\{0, \underline{\alpha}_{i,j}\}, \overline{\alpha}_{i,j}]$$ 
 for any $i,j$. Obviously 
$\al $ contains a rank-one matrix if and only if 
$\hat{\al}$ contains a rank-one matrix.

\smallskip

\begin{rem} \label{KoBer}
In \cite{Ko} and \cite{Ber} the authors showed that there exists a sign pattern $Q$ such that the minimal rank $r^{\R}_{Q}$  of the real matrices with sign pattern $Q$ is strictly smaller 
than  the minimal rank $r^{\Q}_{Q}$ of the rational matrices with sign pattern $Q$. Let $A $ be a real matrix  with sign pattern $Q$ 
and rank $r^{\R}_{Q}$. Let $\al$ be an interval matrix containing $A$ and such that, for any $i,j$, we have:

$\al_{i,j}=\{0\}$ if and only if $a_{i,j}=0$,

$\al_{i,j} \subset \R_{>0}$ if and only if $a_{i,j}>0$,

$\al_{i,j} \subset \R_{<0}$ if and only if $a_{i,j}<0$. 

Obviously
$\mrk(\al)= r^{\R}_{Q}$ and, since there does not exist a rational matrix with sign pattern $Q$ and rank  $r^{\R}_{Q}$,
there does not exist a rational  matrix in $\al $ with rank $r^{\R}_{Q}$.
 So Theorem \ref{Q-R1} and Theorem \ref{Q-R2}  are not generalizable to any rank, that is, it is not true for any $r$,
 that, 
 if an interval matrix contains a rank-$r$ real matrix, then it contains   a rank-$r$ rational matrix.

\end{rem}

\section{Maximal rank of matrices contained in a subset matrix over any field}

\begin{defin}
Given a $p \times p $ subset  matrix over e a field $K$, $\al$, a {\bf partial generalized
diagonal} ({\bf pg-diagonal} for short) of length $k$ of $\al$ is a $k$-uple of the kind $$
(\al_{i_1, j_1},\dots, \al_{i_k, j_k})$$ 
for some  $\{i_1, \dots i_k\}$ and $ \{j_1, \dots, j_k\} $ subsets of $ \{1,\dots ,p\}$.

Its {\bf complementary matrix} is defined to be the submatrix of $\al$ given by the rows and columns whose indices are respectively in  $\{1,\ldots , p\} \setminus \{i_1, \ldots , i_k\}$ and 
in  $\{1,\ldots , p\} \setminus \{j_1, \ldots , j_k\}$. 

We say that a pg-diagonal is {\bf totally nondegenerate} if and only if all its entries are not degenerate.

We define $\det^c(\al) $ to be
$$ \sum_{\sigma \in \Sigma_p \; s.t. \; \al_{1, \sigma(1)}, \dots, \al_{p, \sigma(p)} \;  are \; degenerate} \epsilon (\sigma) \,
\al_{1, \sigma(1)} \cdot \ldots  \cdot\al_{p, \sigma(p)} $$
 if  there exists $\sigma \in \Sigma_p $ such that $  \al_{1, \sigma(1)}, \dots, \al_{p, \sigma(p)}$ 
  are  degenerate; we define $\det^c(\al) $ to be equal to $0$ otherwise.

For every pg-diagonal of length $p$, say 
$
\al_{1, \sigma(1)} \cdot \ldots  \cdot\al_{p, \sigma(p)} $
for some $\sigma \in \Sigma_p$, 
 we call  $\epsilon (\sigma) $ also  the sign of the pg-diagonal.
\end{defin}

In \cite{H} 
Hlad\'ik introduced the notion of strongly singular interval matrix. We generalize it to 
subset matrices over any field.

\begin{defin} 
(Hlad\'ik).
Let $\al $ be a $p \times p $ subset matrix over a field $K$. We say that it is strongly singular if $\Mrk(\al) < p$, that is, if every $A
\in \al $ is singular. 
\end{defin}

\begin{thm} \label{Mrkquadrate}
Let $\al $ be a $p \times p $ subset matrix over a field $K$. Then $\al $ is {\bf strongly singular} if and only if the following conditions hold:

(1) in $\al$ there is no totally nondegenerate pg-diagonal of length $p$,

(2) the complementary matrix of every  totally nondegenerate pg-diagonal of  length between $0$ and $p-1$ has $\det^c $ equal to $0$ (in particular $\det^c (\al)=0$).

\end{thm}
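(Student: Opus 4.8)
The plan is to recognize that $\al$ is strongly singular exactly when the determinant, viewed as a function on $\prod_{i,j}\al_{i,j}$, vanishes identically, and then to convert this into a statement about the coefficients of a multilinear polynomial. I would treat each nondegenerate entry $\al_{i,j}$ as a variable $x_{i,j}$ (ranging over a set of cardinality $\geq 2$) and each degenerate entry as the constant equal to its unique element; then $\det A=\sum_{\sigma\in\Sigma_p}\epsilon(\sigma)\prod_i a_{i,\sigma(i)}$ becomes a polynomial $P$ in the $x_{i,j}$ of degree at most one in each variable, since the determinant is multilinear in the rows. The first step is a vanishing lemma: a polynomial that is multilinear and vanishes on a product $\prod_t S_t$ of sets with $|S_t|\geq 2$ is the zero polynomial. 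This is proved by induction on the number of variables, writing $P=g+x_1h$ with $g,h$ free of $x_1$; for each fixed value of the remaining variables, $P$ is an affine function of $x_1$ vanishing at two distinct points, so $g$ and $h$ vanish there, and the inductive hypothesis forces $g\equiv h\equiv 0$. As this forces a polynomial identity rather than a statement about evaluation everywhere, it holds over an arbitrary field $K$, with no infiniteness assumption. Hence $\al$ is strongly singular if and only if $P$ is the zero polynomial, i.e. all its coefficients vanish.

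Next I would compute those coefficients and match them to $\det^c$ of complementary matrices. Grouping the terms of $\det A$ according to the set $N(\sigma)=\{i:\al_{i,\sigma(i)}\text{ nondegenerate}\}$, each (squarefree) monomial of $P$ is indexed by a totally nondegenerate pg-diagonal $\pi$, namely the restriction $\sigma|_{N(\sigma)}$, and distinct pg-diagonals give distinct monomials. For a fixed $\pi$ on rows $I$ and columns $J$, the permutations contributing to the monomial $m_\pi=\prod_{i\in I}x_{i,\pi(i)}$ are exactly those whose restriction to $I^c$ is a bijection $\tau:I^c\to J^c$ hitting only degenerate entries. Factoring the sign by the generalized Laplace expansion, $\epsilon(\sigma)=(-1)^{\sum_{i\in I}i+\sum_{j\in J}j}\,\epsilon(\pi)\,\epsilon(\tau)$, the coefficient of $m_\pi$ is
\[
(-1)^{\sum_{i\in I}i+\sum_{j\in J}j}\,\epsilon(\pi)\sum_{\tau}\epsilon(\tau)\prod_{i\in I^c}a_{i,\tau(i)}.
\]
After the order-preserving relabelling of $I^c$ and $J^c$ onto $\{1,\dots,p-|I|\}$ (which preserves signs), the inner sum is precisely $\det^c$ of the complementary matrix of $\pi$. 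Thus the coefficient of $m_\pi$ is a nonzero multiple $\pm\epsilon(\pi)$ of $\det^c$ of the complementary matrix of $\pi$.

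Finally I would read off the two conditions. The pg-diagonals of length $p$ correspond to full permutations with all entries nondegenerate; their complementary matrix is empty, and $\det^c$ of the empty matrix equals $1$, so the corresponding monomial has coefficient $\pm1\neq0$. Hence $P\equiv0$ forces the absence of any totally nondegenerate pg-diagonal of length $p$, which is condition (1). For a totally nondegenerate pg-diagonal $\pi$ of length $k$ with $0\leq k\leq p-1$, the coefficient of $m_\pi$ vanishes if and only if $\det^c$ of its complementary matrix is $0$; requiring this for all such $\pi$ (the case $k=0$ giving $\det^c(\al)=0$) is exactly condition (2). Therefore $P$ is the zero polynomial if and only if (1) and (2) hold, which by the first paragraph is equivalent to $\al$ being strongly singular.

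The main obstacle is the sign bookkeeping in the middle step: one must verify that the sign of each full permutation factors cleanly into the Laplace shuffle sign times the signs of $\pi$ and $\tau$, and that the order-preserving identification of the complementary row and column index sets leaves $\det^c$ unchanged, so that the inner sum is genuinely the $\det^c$ of the complementary subset matrix in the sense of the definition. Everything else—the vanishing lemma and the bijection between monomials of $P$ and totally nondegenerate pg-diagonals—is routine once this identification is in place.
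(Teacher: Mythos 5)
Your proof is correct, but it follows a genuinely different route from the paper's. The paper splits the equivalence: necessity is proved by induction on $p$ --- a totally nondegenerate pg-diagonal of length $p$, or one whose complementary matrix has $\det^c\neq 0$, yields by the inductive hypothesis a nonsingular matrix inside a $(p-1)\times(p-1)$ submatrix, which is then extended to a nonsingular element of $\al$ by choosing a suitable value in one nondegenerate entry (an affine function of that entry with nonzero linear coefficient vanishes at most once, and a nondegenerate entry offers at least two values); sufficiency, together with one residual case of necessity (the case $\det^c(\al)\neq 0$ with all longer diagonals fine), is handled by writing $\det(A)$ as in $(\star)$, grading the diagonals by their number of nondegenerate positions and collecting terms sharing the same nondegenerate positions, each collected block carrying a $\det^c$ of a complementary matrix. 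You instead prove both directions at once through a polynomial identity: treating nondegenerate entries as variables, strong singularity says the multilinear polynomial $P$ vanishes on a grid with at least two points per coordinate, your vanishing lemma (valid over any field, finite ones included) forces $P$ to be the zero polynomial, and your sign computation identifies the coefficient of the monomial $m_\pi$ of a totally nondegenerate pg-diagonal $\pi$ with $\pm\epsilon(\pi)$ times $\det^c$ of its complementary matrix, so that the vanishing of all coefficients is literally conditions (1) and (2). Your coefficient extraction is the formal-polynomial version of the paper's grouping step $(\star)$, but you use it for both implications, whereas the paper uses it only for sufficiency and handles necessity by induction plus a perturb-one-entry argument; what your route buys is symmetry between the two directions and a precise statement replacing the paper's somewhat informal ``collecting the terms'', while the paper's route avoids sign bookkeeping altogether, since there only the vanishing (not the exact sign) of the collected $\det^c$ factors ever matters. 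Note finally that your self-declared ``main obstacle'' is lighter than you suggest: you never need the explicit shuffle sign $(-1)^{\sum_{i\in I}i+\sum_{j\in J}j}$, only that $\epsilon(\sigma)/\epsilon(\tau)$ is a fixed sign depending only on $\pi$, and this is immediate because for two extensions $\sigma,\sigma'$ of $\pi$ by $\tau,\tau'$ the permutation $\sigma'\circ\sigma^{-1}$ is the identity on $J$ and equals $\tau'\circ\tau^{-1}$ on its complement, giving $\epsilon(\sigma)\,\epsilon(\sigma')=\epsilon(\tau)\,\epsilon(\tau')$.
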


The proof  is quite similar to the one of Theorem 13 in \cite{Ru1}; for the convenience of the reader we sketch the proof here.

\begin{proof}
$\Longrightarrow$ 
We argue by induction on $p$. For $p=1 $ the statement is obvious. 
 Suppose $p \geq 2$ and that  the statement is true for $(p-1) \times (p-1) $ subset matrices. Let $\al$ be a $p\times p $ subset matrix such that 
 $\Mrk(\al) <p$; so  $\det(A)=0$ for every 
 $A \in \al$. 
 
If $\al$ contained a  totally nondegenerate  pg-diagonal
of length $p$, say $\al_{i_1,j_1}, \ldots
, \al_{i_p, j_p}$, then  $\al_{\hat{i_1}, \hat{j_1}}$ would have obviously a totally nondegenerate pg-diagonal of length $p-1$; hence, by induction assumption, there would  exist $B \in 
\al_{\hat{i_1}, \hat{j_1}} $ with $\det(B) \neq 0$. Hence, for any choice of elements $x_{i_1, j}\in \al_{i_1, j}$  for $j \neq j_1$ and
 $x_{i, j_1} \in \al_{i, j_1}$  for $i \neq i_1$, 
we could find $x \in \al_{i_1,j_1}$ such that the determinant of the matrix $X$ defined by $X_{\hat{i_1}, \hat{j_1}}=B$, $X_{i_1,j_1}=x$, $X_{i,j_1} = x_{i, j_1}$
for any $i \neq i_1$ and $X_{i_1,j} = x_{i_1, j}$
for any $j \neq j_1$  is nonzero, which is absurd. 
So (1) holds. 

Now, by contradiction, 
suppose (2) does not hold. 
Thus in $\al$ there exists a totally nondegenerate pg-diagonal 
of length $k$ with $ 0 \leq k \leq p-1$  whose complementary matrix 
has $\det^c$ nonzero.
If there exists such a diagonal with $k \geq 1$, say 
$\al_{i_1,j_1}, \ldots
, \al_{i_k, j_k}$, then also $ \al_{\hat{i_1},\hat{j_1}}$ does not satisfy (2), so, by induction assumption,   there exists $B \in 
\al_{\hat{i_1}, \hat{j_1}} $ with $\det(B) \neq 0$ and, as before, we can get a contradiction.
On the other hand, suppose that $\det^c(\al) \neq 0$ 
and the complementary matrix of every 
totally nondegenerate pg-diagonal 
of length $k$ with $ 1 \leq k \leq p-1$ 
has $\det^c$ equal to zero; we call this assumption ($\ast$).

Let $A \in \al$. 
By  (1), we can write $\det (A)$ as the sum of: 

- the  sum (with sign) 
of the product of the entries of the  pg-diagonals of $A$ of length $p$ such the corresponding entries of $\al$ are all degenerate, 

- the  sum (with sign) 
of the product of the entries of the  pg-diagonals of $A$  of length $p$ such all the corresponding entries of $\al$  apart from one are degenerate, 

....

- the  sum (with sign) 
of the product of the entries of the  pg-diagonals of $A$  of length $p$ such all the corresponding entries of $\al$  apart from $p-1$  are degenerate. 

We call ($\star$) this way to write $det(A)$.

The first sum coincides with  $\det^c(\al) $, so it is nonzero by the assumption ($\ast$); we can 
write the second sum by collecting the terms containing the same entry corresponding to the nondegenerate  entry of 
$\al$; so, by assumption ($\ast$), we get that 
this sum is  zero; we argue analogously for the other sums. So we can conclude that $\det(A)$ is nonzero, a contradiction.

$\Longleftarrow $ Let $\al$ be a  matrix 
satisfying (1) and (2) and let $A \in \al$. 
By  (1), we can write $\det (A)$ as in ($\star$).

The first sum is zero by assumption; we can 
write the second sum by collecting the terms containing the same entry corresponding to the nondegenerate  entry of 
$\al$; so by assumption we get that also 
this sum is  zero. We argue analogously for the other sums. 
\end{proof}

\begin{cor} \label{Mrk}
Let $\al $ be a subset matrix over a field $K$.
 Then $\Mrk(\al)$
is the maximum of the natural numbers $t$ such that there is a $ t \times t $ submatrix  of
 $\al$ either with a totally nondegenerate pg-diagonal  of  length  $t$ or with a totally nondegenerate
 pg-diagonal of  length between $0$ and $t-1$ whose complementary matrix has $\det^c  \neq 0$.
 \end{cor}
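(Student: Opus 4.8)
The plan is to derive Corollary \ref{Mrk} from Theorem \ref{Mrkquadrate} by a standard reduction: the rank of a matrix equals the size of its largest nonsingular square submatrix, so $\Mrk(\al)$ should equal the size of the largest square submatrix $\al'$ of $\al$ that is \emph{not} strongly singular. Theorem \ref{Mrkquadrate} tells us precisely when a square subset matrix \emph{is} strongly singular, and negating its two conditions yields exactly the criterion in the corollary statement. So the whole argument rests on translating ``$\Mrk(\al) \geq t$'' into ``some $t \times t$ submatrix is not strongly singular.''

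\emph{First} I would establish the submatrix characterization of maximal rank. For any $A \in M(p \times q, K)$ we have $\rk(A) \geq t$ if and only if $A$ contains a $t \times t$ submatrix with nonzero determinant. Taking the maximum over $A \in \al$, I claim that $\Mrk(\al) \geq t$ if and only if there is a $t \times t$ submatrix $\al'$ of $\al$ and some $A' \in \al'$ with $\det(A') \neq 0$. The nontrivial direction is showing that a nonsingular $t \times t$ block witnessed by \emph{some} matrix in $\al$ can be realized by a genuine submatrix of $\al$; this is immediate because choosing the entries in that block already fixes an element of the corresponding subset submatrix, and the entries outside the block are irrelevant to the rank lower bound. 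Thus $\Mrk(\al) = \max\{t : \text{some } t\times t \text{ submatrix } \al' \text{ of } \al \text{ has } \Mrk(\al') = t\}$, i.e.\ some $t\times t$ submatrix is \emph{not} strongly singular.

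\emph{Next} I would invoke Theorem \ref{Mrkquadrate} applied to each square submatrix $\al'$. A $t \times t$ subset matrix $\al'$ fails to be strongly singular (equivalently $\Mrk(\al') = t$) exactly when at least one of conditions (1), (2) in the theorem fails. Negating condition (1) gives: $\al'$ has a totally nondegenerate pg-diagonal of length $t$. Negating condition (2) gives: the complementary matrix of some totally nondegenerate pg-diagonal of length between $0$ and $t-1$ has $\det^c \neq 0$. Hence $\al'$ is non--strongly-singular precisely when one of these two disjunctive conditions holds, which is verbatim the description in the corollary. Taking the maximum $t$ over all square submatrices completes the identification.

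\emph{The step I expect to be the main obstacle} is the careful handling of the realizability claim in the reduction, namely that the lower bound ``$\Mrk(\al) \geq t$'' can always be certified inside a single $t\times t$ submatrix. One must check that when a nonsingular $t\times t$ minor is witnessed by some full matrix $A \in \al$, the restriction of $A$ to those $t$ rows and columns is a legitimate element of the subset submatrix $\al'$, and that Theorem \ref{Mrkquadrate} then applies to $\al'$ with the correct notions of pg-diagonal, degeneracy, and $\det^c$ inherited from $\al$. These are routine but need to be stated cleanly; once the equivalence $\Mrk(\al) \geq t \iff$ some $t\times t$ submatrix is not strongly singular is in place, the corollary follows directly by substituting the negations of (1) and (2) and taking the maximal such $t$.
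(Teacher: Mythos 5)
Your proposal is correct and is exactly the route the paper intends: the paper states Corollary \ref{Mrk} as an immediate consequence of Theorem \ref{Mrkquadrate}, via the standard fact that $\Mrk(\al)\geq t$ iff some $t\times t$ submatrix $\al'$ of $\al$ satisfies $\Mrk(\al')=t$ (i.e.\ is not strongly singular, using that all entries of $\al$ are nonempty so any nonsingular block extends to a matrix in $\al$), and then negating conditions (1) and (2). Your flagged ``main obstacle'' is indeed only routine bookkeeping, so nothing is missing.
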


\bigskip

{\bf Acknowledgments.}
This work was supported by the National Group for Algebraic and Geometric Structures, and their  Applications (GNSAGA-INdAM).

The author wishes to thank the anonymous referee for his/her comments, which helped to improve the paper.

{\small }


\begin{thebibliography}{Dilloo Dilloo 83}







\bibitem{A1}  Arav, M.; Hall, F.J.; Koyuncu, S.; Li, Z.; Rao, B. {\em Rational realizations of the minimum rank of a sign pattern matrix.} Linear Algebra Appl. 409 (2005), 111-125.

\bibitem{A2}
 Arav, M.; Hall, F.J.; Li, Z.; van der Holst, H.; Sinkovic, J.; Zhang, L. {\em Minimum ranks of sign patterns via sign vectors and duality.} Electron. J. Linear Algebra 30 (2015), 360-371. 


\bibitem{Ber}
Berman, A.; Friedland, S.; Hogben, L.; Rothblum, U.G; Shader, B. {\em
Minimum rank of matrices described by a graph or pattern over the rational, real and complex numbers. }
Electron. J. Combin. 15 (2008), no. 1, Research Paper 25, 19 pp.

\bibitem{CD} Cohen, N.; Dancis, J.  
{\em Maximal Ranks Hermitian  Completions of Partially specified Hermitian matrices.}
Linear Algebra Appl. 244 (1996), 265-276.

\bibitem{CJRW} Cohen, N.; Johnson, C.R.; Rodman, Leiba; Woerdeman, H. J.  
{\em Ranks of completions of partial matrices.}
The Gohberg anniversary collection, Vol. I (Calgary, AB, 1988), 165-185, 
Oper. Theory Adv. Appl., 40, Birkh\"{a}user, Basel, 1989.

 




\bibitem{G-S} Gillis, N.; Shitov, Y. {\em  Low-rank Matrix Approximation in the Infinity Norm},  
arXiv:1706.00078

\bibitem{H}
Hlad\'ik, M. {\em AE Regularity of Interval Matrices} Electron. J. Linear Algebra 33 (2018) 137-146.


\bibitem{HHW} Hadwin, D.; Harrison J.; Ward, J.A. {\em
Rank-one completions of partial matrices and 
completing rank-nondecreasing linear functionals.}
Proc. A.M.S. 134  (2006),  no. 8, 2169-2178.


\bibitem{Ko} Kopparty S.; Bhaskara Rao, K.P.S. {\em The minimum rank problem: A counterexample.}
Linear Algebra Appl. 428 (2008), no. 7, 1761-1765. 




\bibitem{A3} Li, Z.; Gao, Y.; Arav, M.; Gong, F., Gao, W.; Hall, F.J.; van der Holst, H.  {\em
Sign patterns with minimum rank 2 and upper bounds on minimum ranks.}    Linear and Multilinear Algebra 
61(7) (2013),  895-908. 

\bibitem{Moore} Moore, R. Methods and applications of interval analysis, SIAM Studies in Applied Mathematics, 1979


\bibitem{Moore2} Moore, R.E., Kearfott, R.B., Cloud, M. Introduction to  interval analysis, SIAM, Philadelphia, 2009


\bibitem{Neu} Neumaier, A. Interval methods 
for systems of equations, Cambridge University Press, 1990

\bibitem{Rohn} Rohn, J. {\em Systems of Linear Interval Equations.}
Linear Algebra Appl.  126 (1989) 39-78.



\bibitem{Rohn2} Rohn, J. {\em Forty necessary and sufficient conditins for regularity of interval matrices: A survey.}
Electronic Journal of Linear Algebra 18 (2009).



\bibitem{Rohn3} Rohn, J. {\em A Handbook 
of Results on Interval Linear Problems},
Prague: Insitute of Computer Science,
Academy of Sciences of the Czech Republic, 2012,
http://www.nsc.ru/interval/Library/InteBooks/!handbook.pdf.


\bibitem{Rohn4} Rohn, J. {\em
Enclosing solutions of overdetermined systems of linear interval equations}, Reliable Computing, 2 (1996), 167-171.


\bibitem{Ru1} Rubei, E. {\em On rank range of interval matrices},  Linear Algebra and its Applications 561 (2019), 81-97 (2019), DOI:10.1016/j.laa.2018.09.018 



\bibitem{Ru2} Rubei, E. {\em A generalization of Rohn's theorem on full-rank interval matrices}, arXiv:1803.05433 to appear in Linear and Multilinear Algebra, DOI:10.1080/03081087.2018.1521366 


\bibitem{Shary} Shary, S.P. {\em On 
Full-Rank Interval Matrices} Numerical Analysis and Applications 7 (2014), no. 3, 241-254.


\bibitem{Shi} Shitov, Y. {\em Sign pattern
of rational matrices with large rank}
European Journal of Combinatorics 42 (2014),  107-111.  






\end{thebibliography}
\end{document}